\documentclass[11pt]{amsart}
\usepackage{amssymb}
\usepackage{times}
\usepackage{url}
\usepackage{url}
\usepackage{hyperref}
\usepackage{xcolor}

\usepackage{rotating}
\usepackage{pdflscape}

\newtheorem{theorem}{Theorem}[section]
\newtheorem{lemma}[theorem]{Lemma}
\newtheorem{proposition}[theorem]{Proposition}
\newtheorem{corollary}[theorem]{Corollary}
\theoremstyle{remark}
\newtheorem{remark}[theorem]{Remark}
\newtheorem{example}[theorem]{Example}

\newcommand{\Z}{\mathbb{Z}}
\newcommand{\Q}{\mathbb{Q}}

\newcommand{\C}{\mathbb{C}}
\newcommand{\R}{\mathbb{R}}

\newcommand{\GL}{\mathrm{GL}}

\newcommand{\SL}{\mathrm{SL}}

\newcommand{\Sp}{\mathrm{Sp}}

\newcommand{\g}{\mathfrak{g}}
\newcommand{\gl}{\mathfrak{gl}}
\renewcommand{\sl}{\mathfrak{sl}}
\renewcommand{\a}{\mathfrak{a}}
\newcommand{\h}{\mathfrak{h}}

\begin{document}


\title[2-generation of simple Lie algebras and free dense subgroups of algebraic groups]{2-generation of simple Lie algebras and free dense subgroups of algebraic groups}

\author{Alla S. Detinko}
\address{School of Computer Science\\
  University of St Andrews\\
  United Kingdom}
\email{ad271@st-andrews.ac.uk}

\author{Willem A. de Graaf}
\address{Dipartimento di Matematica\\
Universit\`{a} di Trento\\
Italy}
\email{degraaf@science.unitn.it}


\begin{abstract}
We construct generating pairs of simple Lie algebras in characteristic zero. We apply this construction to exhibit infinite 
series of 2-generator Zariski dense subgroups that are free of rank 2 of
the simple algebraic groups $\SL(n, \C)$, $\Sp(n, \C)$, $G_2(\C)$.
\end{abstract}

\maketitle

\section{Introduction}\label{1}

The Tits alternative implies that each non-solvable-by-finite linear group $H$ in zero characteristic 
contains a free non-abelian subgroup \cite[p. 145]{Wehr}. Moreover free subgroups are ubiquitous in $H$ 
(see, e.g., \cite{Aoun, Epstein, FuchsRivin}). 
These subgroups are important for investigation 
of structure and properties of $H$, especially if a free subgroup is reasonably `large', 
i.e. Zariski-dense in the closure of $H$; see, for examples, \cite{Breuillard, Kuranishi}.
Despite ubiquity, explicit construction of free dense subgroups is still a problem. 
A reason is that justification of freeness of a linear group in general is
difficult (although it could be tested computationally whether a finitely generated linear group 
contains a free non-abelian subgroup, \cite{Tits}). A famous example is the problem of freeness of the group 
$$H_2(t, s) := \langle {\tiny \left( \begin{array}{cc}
1 & t \\
0 & 1\end{array}
\right), 
\left( \begin{array}{cc}
1 & 0 \\
s & 1\end{array}
\right)} \rangle.$$

Since the 1950s it is known that the group is free for $|t| \geq 2$, $|s| \geq 2$ as well as for $t$, $s$ 
algebraically independent over $\Q$; here $t, s \in \C$ (\cite[p. 168]{lyndonschupp}, \cite[p. 30-32]{Wehr}).
Nevertheless, up to now it is not known whether there are rational 
values of $t,s$ with $-2 < t, s < 2$ for which $H_2(t, s)$ is free; 
non-freeness has been justified for a range of such pairs (see, e.g., \cite{Beardon}).


In this paper we provide a method to construct free dense subgroups of rank 2 of simple algebraic groups. 
Using the method we obtain infinite series of rank 2 free dense subgroups in a number of simple algebraic groups, 
including $\SL(n, \C)$, $\Sp(n, \C)$. These free subgroups are produced 
in a similar way as $H_2(t,s)$ and are given by 
explicit generating sets of size 2. In particular, we exhibit 2-parameter families of rank 2 free dense subgroups 
of $\SL(n, \Z)$, $\Sp(n, \Z)$, i.e. explicit examples of \emph{thin} matrix groups 
(dense subgroups of infinite index \cite{Sarnak}). 
Although thin subgroups are known to be ubiquitous, explicit examples 
still are rare \cite{Fuchs, FuchsRivin, Sarnak}; see the recent paper
\cite{ab} for a series of free dense subgroups in $\SL(n,\C)$. 

Our approach is based on the construction of 2-generator sets of simple Lie
algebras which is of independent interest (cf. \cite{ionescu}, \cite[Section 2]{Kuranishi}, \cite{chisto}).
If two nilpotent elements $x,y$ of the Lie algebra $\g$ of a simple algebraic
group
$G$ generate $\g$, then the group generated by the two elements
$\exp(sx)$, $\exp(ty)$ is Zariski-dense in $G$ (see Proposition
\ref{prop:dense}).
So the density of the group is guaranteed. We also manage to show that the
groups are free for many values of $s,t$ by applying the so-called ping-pong
lemma. In order to be able to apply this lemma we need generators $x,y$ that
are sufficiently ``nice'' (i.e., seen as $n\times n$-matrices which have
very few nonzero entries, allowing for a nice description of the exponentials).
In the next section we go into the problem of finding such ``nice'' generators
of a simple Lie algebra $\g$. Then 
in Section~\ref{3} we apply it to obtain 
2-generator free dense subgroups of rank 2 of simple algebraic groups.

\section{2-generation of simple Lie algebras}\label{2}

It is known that a split semisimple Lie algebra over a field of characteristic 0
has two elements generating it (\cite{ionescu}, \cite{Kuranishi}). Here we
consider the problem to find two nilpotent elements that generate a given
semisimple Lie algebra. For our applications we want these generators to
be as ``nice'' as possible, that is, viewing the Lie algebra as a matrix
algebra, we want generators that have few nonzero coefficients, so that we
have good control over their exponentials.

First we show that it is straightforward to find two nilpotent elements
generating a given semisimple Lie algebra (Proposition \ref{prop:2}).
Subsequently we consider the Lie algebras $\sl(n,K)$, $\mathfrak{so}(n,K)$ for
$n$ odd and different from $7$ and $\mathfrak{sp}(n,K)$, where $n$ is even.
For each of these we find two ``nice'' nilpotent elements generating the Lie
algebra. Along the way we also find a ``nice'' generating set of the simple
Lie algebra of type $G_2$ in its realization as a subalgebra of $\sl(7,K)$.
In the end we prove a theorem on a subalgebra of a simple Lie algebra
generated by two specific nilpotent elements (Theorem \ref{thm:list}). This
theorem plays no role in our paper, but we note it as a curiosity that we
can prove using the previous results.

In our proofs we sometimes verify isolated cases by direct computation
in the computer algebra system {\sf GAP}4 (\cite{GAP4}).

The ground field $K$ is of characteristic 0.
We consider split semisimple Lie algebras $\g$ over $K$. Alternatively, we could
just consider semisimple Lie algebras over $\C$. Let $\h$ be a Cartan
subalgebra of $\g$. Corresponding to that $\g$ has a root system $\Phi$,
with set of simple roots $\Delta=\{\alpha_1,\ldots,\alpha_\ell\}$ and 
Cartan matrix $C$. Then
$\g$ is generated by a canonical set of generators: $h_i$, $x_{\alpha_i} \in
\g_{\alpha_i}$, $x_{-\alpha_i}\in \g_{-\alpha_i}$ for $1\leq i\leq \ell$. These
satisfy the relations (see \cite{jacobson}, \S IV.3)
\begin{align*}
  [h_i,h_j]&=0\\
  [h_i,x_{\alpha_j}] &= C(j,i)x_{\alpha_j}\\
  [h_i,x_{-\alpha_j}] &= -C(j,i)x_{-\alpha_j}\\
  [x_{\alpha_i},x_{-\alpha_j}] &= \delta_{i,j} h_i.
\end{align*}  
  
\begin{proposition}\label{prop:1}
  Let $h\in \h$ be such that $\{ \pm \alpha_i(h) \mid 1\leq i\leq \ell\}$
  consists of $2\ell$ distinct elements. Let $u=\sum_{j=1}^\ell a_i x_{\alpha_i}
  +\sum_{i=1}^\ell b_i x_{-\alpha_i}$, with all $a_i,b_i$ nonzero. Then $h,u$
  generate $\g$.
\end{proposition}

\begin{proof}
  Let $m=2\ell$ and $\beta_1,\ldots,\beta_m = \alpha_1,\ldots,\alpha_\ell,
  -\alpha_1,\ldots,-\alpha_\ell$. Then $u=\sum_j c_j x_{\beta_j}$. Define
  $[h^0,u]=u$, $[h^{i+1},u]=[h,[h^i,u]]$. Then $[h^i,u] = \sum_j \beta_j(h)^i
  c_jx_{\beta_j}$. So if we consider the elements $[h^i,u]$ for $0\leq i\leq m-1$,
  put their coefficient vectors into a matrix then we get a Vandermonde
  matrix so that the determinant is
  $$c_1\cdots c_m \prod_{i<j} (\beta_j(h)-\beta_i(h)).$$
  Hence it is nonzero, so that the space spanned by the $[h^i,u]$ contains all
  $x_{\alpha_j}$. Therefore, $h,u$ generate $\g$.
\end{proof}

\begin{proposition}\label{prop:2}
  Let $x=x_{\alpha_1}+\cdots +x_{\alpha_\ell}$, $y=b_1 x_{-\alpha_1}+\cdots +
  b_\ell x_{-\alpha_\ell}$, $b_i\neq 0$ for all $i$.
  Let $b\in K^\ell$ be the vector with coordinates $b_i$. 
  Write $v_1,\ldots,v_\ell$ for the coordinates of $Cb$. Suppose that
  $\{ \pm v_i \mid 1\leq i\leq \ell\}$ consists of $2\ell$ distinct elements.
  Then $x,y$ generate $\g$.
\end{proposition}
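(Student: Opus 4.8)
The plan is to reduce the statement to Proposition \ref{prop:1} by exhibiting, inside the subalgebra $\langle x,y\rangle$ generated by $x$ and $y$, both a Cartan element $h$ satisfying the hypothesis of that proposition and an element $u$ of the required shape.

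First I would compute the bracket $[x,y]$. Expanding by bilinearity and using the relation $[x_{\alpha_i},x_{-\alpha_j}]=\delta_{i,j}h_i$ among the canonical generators gives
\[
  [x,y]=\sum_{i,j} b_j\,[x_{\alpha_i},x_{-\alpha_j}]
  =\sum_{i,j} b_j\,\delta_{i,j}\,h_i
  =\sum_i b_i h_i=:h,
\]
which is an element of $\h$ lying in $\langle x,y\rangle$. Next I would read off the eigenvalues $\alpha_k(h)$. Comparing the relation $[h_i,x_{\alpha_k}]=C(k,i)x_{\alpha_k}$ with the defining property $[h_i,x_{\alpha_k}]=\alpha_k(h_i)x_{\alpha_k}$ of the root space yields $\alpha_k(h_i)=C(k,i)$, and hence
\[
  \alpha_k(h)=\sum_i b_i\,\alpha_k(h_i)=\sum_i C(k,i)\,b_i=(Cb)_k=v_k .
\]
Thus the assumption that $\{\pm v_i\mid 1\le i\le\ell\}$ consists of $2\ell$ distinct elements is exactly the assumption on $h$ in Proposition \ref{prop:1}.

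It then remains to supply the element $u$. I would take $u=x+y=\sum_i x_{\alpha_i}+\sum_i b_i x_{-\alpha_i}$, which lies in $\langle x,y\rangle$ and has all its coefficients nonzero (the coefficients of the $x_{\alpha_i}$ are all $1$, and the $b_i$ are nonzero by hypothesis). Hence $u$ is precisely of the form required by Proposition \ref{prop:1}. Applying that proposition to the pair $h,u$ shows that $h$ and $u$ generate $\g$; since $h,u\in\langle x,y\rangle$, we conclude $\langle x,y\rangle=\g$.

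The argument is essentially mechanical once one recognizes $[x,y]$ as $\sum_i b_i h_i$ and notes that $x+y$ already has full support. The only point demanding care — and the place where an error could creep in — is the identification $\alpha_k(h_i)=C(k,i)$: one must keep the row/column convention of the Cartan matrix consistent with the sign conventions in the structure relations, so that $\alpha_k(h)$ comes out as the $k$-th coordinate of $Cb$ and matches the vector $v$ in the hypothesis, rather than the corresponding coordinate of the transpose.
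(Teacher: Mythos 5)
Your proof is correct and follows exactly the paper's argument: set $h=[x,y]=\sum_i b_i h_i$, verify $\alpha_k(h)=(Cb)_k=v_k$, and apply Proposition \ref{prop:1} with $u=x+y$. The extra care you take with the convention $\alpha_k(h_i)=C(k,i)$ is a worthwhile elaboration of a step the paper leaves implicit, but it is the same proof.
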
  

\begin{proof}
  Let $h=[x,y]$ then $h=b_1h_1+\cdots +b_\ell h_\ell$ and $\alpha_i(h) = v_i$.
  So we get the conclusion from Proposition \ref{prop:1} by setting
  $u=x+y$.
\end{proof}

\begin{example}\label{exa:3}
  Let the root system be of type $A_\ell$, with the standard ordering of simple
  roots. For $1\leq i\leq \ell$ let $b_i = \sum_{j=1}^i 2^{n-j}$. 
  Let $x=x_{\alpha_1}+\cdots +x_{\alpha_\ell}$, $y=b_1 x_{-\alpha_1}+\cdots +
  b_\ell x_{-\alpha_\ell}$. Then $x,y$ generate $\g$.

  Indeed, let $v$ be as in Proposition \ref{prop:2}. Then $v_1,\ldots,v_\ell =
  2^{\ell-2},2^{\ell-3},\ldots,2,1,2^\ell$. So the result follows.
\end{example}

Let $e_{i,j}$ denote the $n\times n$-matrix with a 1 on position $(i,j)$ and
zeros elsewhere. For $x,y\in \mathfrak{gl}(n,K)$ we define $[x^0,y]=y$ and
$[x^{s+1},y]=[x,[x^s,y]]$ for $s\geq 0$. Also we use the following result,
which is well-known, and not difficult to prove. 

\begin{lemma}\label{lem:0}
  Let $\g$ be a split simple Lie algebra over $K$, with canonical generators
  $h_i$, $x_{\alpha_i}$, $x_{-\alpha_i}$, $1\leq i\leq \ell$. Let $\alpha_0$ be the
  highest root of the root system of $\g$, and $y_0\in \g_{-\alpha_0}$,
  $y_0\neq 0$. Then $\g$ is generated
  by $\{ x_{\alpha_1},\ldots,x_{\alpha_\ell},y_0\}$.
\end{lemma}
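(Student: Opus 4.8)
The plan is to show that the subalgebra $\a$ generated by $\{x_{\alpha_1},\ldots,x_{\alpha_\ell},y_0\}$ exhausts $\g$ by exhibiting, one root space at a time, that every root space and the whole Cartan subalgebra lie in $\a$. I would organize the argument around the triangular decomposition $\g=\mathfrak{n}^-\oplus\h\oplus\mathfrak{n}^+$, where $\mathfrak{n}^+$ (resp. $\mathfrak{n}^-$) is the sum of the positive (resp. negative) root spaces, and throughout I would use the standard structure fact that $[\g_\gamma,\g_\delta]=\g_{\gamma+\delta}$ whenever $\gamma+\delta\in\Phi$, i.e. the structure constant $N_{\gamma,\delta}$ is nonzero precisely when $\gamma+\delta$ is a root.

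First I would recall that $\mathfrak{n}^+$ is generated as a Lie algebra by the simple root vectors $x_{\alpha_1},\ldots,x_{\alpha_\ell}$. This goes by ascending induction on height: any positive root $\gamma$ that is not simple can be written as $\gamma=\gamma'+\alpha_i$ with $\gamma'$ a positive root and $\alpha_i$ simple, and since $[\g_{\gamma'},\g_{\alpha_i}]=\g_\gamma$ each $\g_\gamma$ lies in $\a$. Hence $\mathfrak{n}^+\subseteq\a$, so all of $\mathfrak{n}^+$, together with $y_0$, is available for bracketing.

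The crux is the symmetric, downward statement: that every negative root space lies in $\a$. I would prove $\g_{-\beta}\subseteq\a$ for each positive root $\beta$ by induction on $d=\mathrm{ht}(\alpha_0)-\mathrm{ht}(\beta)\geq 0$. The base case $d=0$ forces $\beta=\alpha_0$, because in the irreducible root system of a simple $\g$ the highest root is the \emph{unique} maximal positive root, and there $\g_{-\alpha_0}=Ky_0\subseteq\a$. For the inductive step, since $\beta\neq\alpha_0$ there is a simple root $\alpha_i$ with $\beta+\alpha_i\in\Phi$; this root has strictly larger height, so $\g_{-(\beta+\alpha_i)}\subseteq\a$ by induction, and bracketing with $x_{\alpha_i}\in\mathfrak{n}^+\subseteq\a$ gives $[x_{\alpha_i},\g_{-(\beta+\alpha_i)}]=\g_{-\beta}\subseteq\a$, the bracket being nonzero because $\alpha_i-(\beta+\alpha_i)=-\beta$ is a root. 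This yields $\mathfrak{n}^-\subseteq\a$.

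Finally, with both $\mathfrak{n}^+\subseteq\a$ and $\mathfrak{n}^-\subseteq\a$ in hand, for each root $\beta$ the bracket $[x_\beta,x_{-\beta}]$ spans $Kh_\beta$, and these coroots span $\h$; hence $\h\subseteq\a$ and therefore $\a=\g$. I expect the only genuinely load-bearing input to be the combinatorial fact that simplicity (irreducibility) of $\g$ guarantees a unique maximal positive root $\alpha_0$, and that every non-maximal positive root can be raised by some simple root while every non-simple positive root can be lowered by some simple root; everything else reduces to the standard nonvanishing of the $N_{\gamma,\delta}$. It is precisely the symmetry between the upward argument for $\mathfrak{n}^+$ and the downward argument for $\mathfrak{n}^-$ that makes the single lowest-root generator $y_0$ suffice.
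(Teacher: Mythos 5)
The paper states this lemma without proof (it is introduced as ``well-known, and not difficult to prove''), so there is no argument of the authors' to compare yours against. Your proof is correct and complete: the upward generation of $\mathfrak{n}^+$ by the simple root vectors, the downward induction starting from $\g_{-\alpha_0}=Ky_0$, and the recovery of $\h$ from the brackets $[x_{\alpha_i},x_{-\alpha_i}]$ are all standard and correctly deployed, and the nonvanishing $[\g_\gamma,\g_\delta]=\g_{\gamma+\delta}$ for $\gamma+\delta\in\Phi$ does hold in characteristic $0$. The one input you rightly flag as load-bearing --- that for an irreducible root system every positive root $\beta\neq\alpha_0$ admits a simple root $\alpha_i$ with $\beta+\alpha_i\in\Phi$ --- is a genuine standard fact (Bourbaki, \emph{Lie Groups and Lie Algebras}, Ch.~VI, \S 1). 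If you want to bypass that combinatorial lemma entirely, note that $[\g_{-\alpha_0},\g_{-\alpha_i}]\subseteq\g_{-\alpha_0-\alpha_i}=0$ for all $i$, so $y_0$ is a lowest-weight vector of the adjoint representation; since $\g$ is simple this representation is irreducible, hence $U(\mathfrak{n}^+)\cdot y_0=\g$, which immediately places $\mathfrak{n}^-\oplus\h$ (and hence all of $\g$) inside the subalgebra generated by $y_0$ and the $x_{\alpha_i}$.
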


The next proposition has also been proved in \cite{chisto} for $n$ odd.

\begin{proposition}\label{lem:1}
  Let $n\geq 3$ and
  $x = \sum_{i=1}^{n-1} e_{i,i+1}$, $y=e_{n,1}$. Let $\g$ denote the Lie algebra
  generated by $x,y$. Then $\g$ is simple. If $n=2m$ is even then $\g$ is of
  type $C_m$. If $n$ is odd, then $\g$ is of type $A_{n-1}$. 
\end{proposition}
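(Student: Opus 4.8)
The plan is to exhibit, inside $\g$, the simple root vectors and a lowest root vector of the relevant classical Lie algebra and then finish with Lemma~\ref{lem:0}, using the Vandermonde separation idea of Proposition~\ref{prop:1} as the engine. First I would note that $\g\subseteq\sl(n,K)$, since $x,y$ are traceless and brackets of traceless matrices are traceless. The key is to manufacture one good semisimple element of $\g$. From the identity $[x,e_{i,j}]=e_{i-1,j}-e_{i,j+1}$ one computes the iterated brackets $z_k:=[x^k,y]$ as a binomial sum along an anti-diagonal; in particular $h:=[x^{n-1},y]$ is the diagonal matrix whose $(j,j)$-entry is $(-1)^{j-1}\binom{n-1}{j-1}$. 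A short computation then shows that $\mathrm{ad}(h)$ scales the superdiagonal unit $e_{i,i+1}$ by $\lambda_i=(-1)^{i-1}\binom{n}{i}$ for $1\le i\le n-1$. Since the coefficient vectors of the $\mathrm{ad}(h)^k(x)$ form a Vandermonde system in the $\lambda_i$, exactly as in Proposition~\ref{prop:1}, the span of $\{\mathrm{ad}(h)^k(x)\}\subseteq\g$ equals the sum of the projections of $x=\sum_i e_{i,i+1}$ onto the distinct $\mathrm{ad}(h)$-eigenspaces.

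Everything now hinges on the parity of $n$, which enters through the symmetry $\binom{n}{i}=\binom{n}{n-i}$ together with the sign $(-1)^{i-1}$. When $n$ is odd one gets $\lambda_i=-\lambda_{n-i}$, so the $\lambda_i$ are pairwise distinct and the projections recover each $e_{i,i+1}$ individually, placing all simple root vectors $x_{\alpha_i}=e_{i,i+1}$ of $\sl(n,K)$ in $\g$. Because $y=e_{n,1}$ is a nonzero vector of the lowest root space $\g_{-\alpha_0}$ (with $\alpha_0=\epsilon_1-\epsilon_n$ the highest root), Lemma~\ref{lem:0} gives $\g=\sl(n,K)$, of type $A_{n-1}$.

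When $n=2m$ is even the signs instead give $\lambda_i=\lambda_{n-i}$, so $\mathrm{ad}(h)$ has only the $m$ distinct eigenvalues $\lambda_1,\ldots,\lambda_m$ (distinct because $\binom{n}{1}<\cdots<\binom{n}{m}$), and the eigenspace decomposition now yields only the paired combinations $w_i:=e_{i,i+1}+e_{n-i,n-i+1}$ for $1\le i\le m-1$ together with the middle vector $w_m:=e_{m,m+1}$. To identify $\g$ here I would first show that $\g$ preserves a symplectic form: solving $x^{\mathrm T}B+Bx=0$ and $y^{\mathrm T}B+By=0$ simultaneously forces $B$ to be supported on the anti-diagonal with alternating signs, and the two boundary conditions produced by $y$ are compatible precisely when $n$ is even, in which case the essentially unique solution $J$ is skew-symmetric and nondegenerate. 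Thus $\g\subseteq\mathfrak{sp}(2m,K):=\mathfrak{sp}(J)$. With respect to the diagonal Cartan subalgebra of $\mathfrak{sp}(J)$ the $w_i$ are weight vectors of weights $\epsilon_1-\epsilon_2,\ldots,\epsilon_{m-1}-\epsilon_m,2\epsilon_m$, hence are nonzero multiples of the simple root vectors of $\mathfrak{sp}(2m,K)$, while $y=e_{2m,1}$ spans the lowest root space. Lemma~\ref{lem:0} then gives $\mathfrak{sp}(2m,K)\subseteq\g$, so $\g=\mathfrak{sp}(2m,K)$, of type $C_m$. In either case $\g$ is one of the simple algebras $\sl(n,K)$ or $\mathfrak{sp}(2m,K)$, so simplicity is automatic.

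I expect the main obstacle to lie entirely in the even case. For odd $n$ the separation is immediate and Lemma~\ref{lem:0} applies to $\sl(n,K)$ with no extra input, but for even $n$ the eigenvalue collisions $\lambda_i=\lambda_{n-i}$ prevent us from ever extracting the individual $e_{i,i+1}$: we only obtain the symmetric combinations $w_i$. The delicate point is therefore to recognize these combinations as genuine Chevalley generators, which forces the detour through the invariant bilinear form. Establishing that the only common solution $B$ is the alternating anti-diagonal matrix, and checking that its skew-symmetry (equivalently, the compatibility of the $y$-boundary conditions with the $x$-recurrence on the central anti-diagonal) is exactly the parity condition, is the crux of the argument; once $\g\subseteq\mathfrak{sp}(J)$ is in hand, the weight bookkeeping and Lemma~\ref{lem:0} finish the proof routinely.
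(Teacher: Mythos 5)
Your argument is correct, and it reaches the same endpoints as the paper's proof --- the simple root vectors together with the lowest root vector $y$, finished off by Lemma~\ref{lem:0} --- but it gets there by a genuinely different route in the two key steps. Where the paper extracts the diagonal elements $e_{r,r}-e_{n+1-r,n+1-r}$ from the brackets $[u,v]$ with $u=[x^{2n-1-r},y]$, $v=[x^{r-1},y]$, then brackets these with $x$ to obtain $e_{r,r+1}+e_{n-r,n-r+1}$ and runs a separate computation with $[h_0,x]$ to capture $e_{m,m+1}$, you use only the single semisimple element $h=[x^{n-1},y]$ and the Vandermonde/eigenprojection idea of Proposition~\ref{prop:1} applied to $\mathrm{ad}(h)$ acting on $x$: since $\mathrm{ad}(h)e_{i,i+1}=(-1)^{i-1}\binom{n}{i}\,e_{i,i+1}$, the parity of $n$ alone decides whether the eigenvalues separate all the $e_{i,i+1}$ (odd $n$) or only the combinations $e_{i,i+1}+e_{n-i,n-i+1}$ together with $e_{m,m+1}$ (even $n$); this replaces the paper's most computational step by an eigenvalue check and yields $e_{m,m+1}$ for free. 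In the even case, where the paper invokes the diagram automorphism $\varphi(e_{i,j})=(-1)^{i-j+1}e_{n-j+1,n-i+1}$ and cites Kac for the type of its fixed-point subalgebra, you instead solve directly for a bilinear form $B$ killed by $x$ and $y$; the bookkeeping (first and last rows and columns forced to vanish off the main anti-diagonal, alternation along anti-diagonals, and the closing condition $(-1)^{n-1}B_{1,n}=-B_{1,n}$) does show a nonzero solution exists exactly when $n$ is even and is then the nondegenerate alternating anti-diagonal form, giving $\g\subseteq\mathfrak{sp}(2m,K)$. The two devices are the same symmetry in different clothing ($\varphi(z)=-Jz^{\mathrm{T}}J^{-1}$), but your version is self-contained, while the paper's explicit elements and canonical generating set are reused in the proofs of Propositions~\ref{lem:1A} and~\ref{lem:1B}, so its extra computation is not wasted. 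Two small points to make explicit in a write-up: in the odd case, the eigenvalues $(-1)^{i-1}\binom{n}{i}$ are pairwise distinct because equal absolute values occur only for the pairs $(i,n-i)$, which for odd $n$ carry opposite signs; and Lemma~\ref{lem:0} applies to your $w_i$ because each simple root space of $\mathfrak{sp}(2m,K)$ is one-dimensional, so the $w_i$ are scalar multiples of canonical generators.
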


\begin{proof}
By induction on $s$ we have 
  $$[x^s,y] = \sum_{i=\max(0,s-n+1)}^{\min(s,n-1)} (-1)^i \binom{s}{i} e_{n-s+i,i+1}.$$
  Let $1\leq r\leq \lfloor \tfrac{n}{2}\rfloor$ and
  $u=[x^{2n-1-r},y]$, $v=[x^{r-1},y]$ then
  \begin{align*}
    u &= \sum_{i=n-r}^{n-1} (-1)^i \binom{2n-1-r}{i} e_{r+1-n+j,j+1}\\
    v &= \sum_{i=0}^{r-1} (-1)^i \binom{r-1}{i} e_{n-r-1-i,i+1},\\
  \end{align*}  
so after some manipulation we obtain
$$[u,v]= (-1)^{n-r} \sum_{k=1}^r  \binom{r-1}{k-1}\binom{2n-r-1}{n-r-1+k} (e_{k,k}-
    e_{n-r+k,n-r+k}).$$
  Considering this for $r=1,2,\ldots $ we see that $e_{r,r}-e_{n+1-r,n+1-r}\in\g$
  for $1\leq r\leq \lfloor \tfrac{n}{2}\rfloor$. Now
  $[e_{1,1}-e_{n,n},x]=e_{1,2}+e_{n-1,n}$ and for $r\geq 2$
  $$[e_{r,r}-e_{n+1-r,n+1-r},x] = e_{r,r+1}+e_{n-r,n-r+1}-e_{r-1,r}-e_{n+1-r,n+2-r},$$
  from which it follows that $e_{r,r+1}+e_{n-r,n-r+1}\in \g$ for $1\leq r
  <  \tfrac{n}{2}$.

  Now suppose that $n=2m$. Let $h_0 = [x^{n-1},y] = \sum_{l=1}^n (-1)^{l-1}
  \binom{n-1}{l-1} e_{l,l}$. Then $[h_0,x]=\sum_{k=1}^{n-1} (-1)^{k-1}\binom{n}{k}
  e_{k,k+1}$. But this is equal to
  $$\sum_{k=1}^{m-1} (-1)^{k-1}\binom{n}{k} (e_{k,k+1}+e_{n-k,n-k+1})+(-1)^{m-1}
  \binom{n}{m} e_{m,m+1}.$$
  So it follows that $e_{m,m+1}\in \g$.

  Define the linear map $\varphi : \gl(n,K)\to \gl(n,K)$ by
  $\varphi(e_{i,j})=(-1)^{i-j+1} e_{n-j+1,n-i+1}$. Then $\varphi$ is an automorphism
  of $\gl(n,K)$, leaving invariant $\sl(n,K)$. So its restriction to the
  latter is an automorphism of $\sl(n,K)$. By inspection it is seen that
  $\varphi$ is equal to the nontrivial diagram automorphism of $\sl(n,K)$.
  Let $\a = \{ z \in \sl(n,K) \mid \varphi(z)=z\}$. Then it is known that
  $\a$ is a simple Lie algebra of type $C_m$ (see, for example,
  \cite{kac}, \S 7.9).
  Since $x,y\in \a$ we have $\g\subset \a$. For $1\leq i\leq m-1$ set
  $x_i = e_{i,i+1}+e_{n-i,n-i+1}$, $y_i=e_{i+1,i}+e_{n-i+1,n-i}$, $h_i=e_{i,i}-
  e_{i+1,i+1}+e_{n-i,n-i}-e_{n-i+1,n-i+1}$ and $x_m=e_{m,m+1}$, $y_m=e_{m+1,m}$,
  $h_m=e_{m,m}-e_{m+1,m+1}$. Then these elements satisfy the relations of
  a canonical generating set of a Lie algebra of type $C_m$. Hence they
  generate a subalgebra of $\a$ of type $C_m$ (see for example,
  \cite{graaf_sss}). But they all lie in $\a$, so it follows that they
  generate $\a$. We have that $y$ is the lowest root vector
  of $\a$. So $\a$ is also generated by the $x_i$ along with $y$ (Lemma
  \ref{lem:0}). But all
  these elements lie in $\g$. The conclusion is that $\g=\a$.

  Now suppose that $n$ is odd. Again we have the element $h_0 = \sum_{l=1}^n
  (-1)^{l-1}\binom{n-1}{l-1} e_{l,l}$ in $\g$. Also $x_i = e_{i,i+1}+e_{n-i,
    n-i+1}\in \g$ for $1\leq i \leq \tfrac{n-1}{2}$. A short calculation
  shows that
  $$[h_0,x_i] = (-1)^i \binom{n}{i} (-e_{i,i+1}+e_{n-i,n-i+1}).$$
  Hence $e_{i,i+1}\in \g$ for $1\leq i\leq n-1$. These are the simple root
  vectors of $\sl(n,K)$. Again, $y$ is a lowest root vector of $\sl(n,K)$.
  It follows that $\g = \sl(n,K)$.
\end{proof}

\begin{proposition}\label{lem:1A}
  Let $n\geq 4$ be even, 
  $x = \sum_{i=1}^{n-1} e_{i,i+1}$, $y=e_{n-1,1}+e_{n,2}$. Let $\g$ denote the
  Lie algebra generated by $x,y$. Then $\g=\sl(n,K)$.
\end{proposition}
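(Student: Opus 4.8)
The plan is to reuse the diagram automorphism $\varphi$ from the proof of Proposition \ref{lem:1} and to reduce the whole statement to the single assertion $e_{n,1}\in\g$. Recall that for the generator $y=e_{n,1}$ treated there, \emph{both} generators were $\varphi$-fixed, which is exactly why $\langle x,e_{n,1}\rangle$ collapsed to the type $C_{n/2}$ fixed-point subalgebra $\a=\{z\in\sl(n,K)\mid \varphi(z)=z\}$. The point of the modified $y$ is that it breaks this symmetry. Indeed, using $\varphi(e_{i,j})=(-1)^{i-j+1}e_{n-j+1,n-i+1}$ and that $n$ is even, a direct check gives $\varphi(e_{n-1,1})=(-1)^{n-1}e_{n,2}=-e_{n,2}$ and $\varphi(e_{n,2})=-e_{n-1,1}$, so $\varphi(y)=-y$; since also $\varphi(x)=x$, the algebra $\g$ is $\varphi$-invariant while $y\notin\a$.

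First I would show $e_{n,1}\in\g$. Set $w=[x^{n-3},y]$. From $[x,e_{a,b}]=e_{a-1,b}-e_{a,b+1}$ one gets the expansion $[x^s,e_{a,b}]=\sum_i(-1)^i\binom{s}{i}e_{a-s+i,\,b+i}$ (out-of-range terms dropped), exactly as in the special case used in Proposition \ref{lem:1}. Applying this to the two terms of $y$ shows that $w$ lies entirely on the subdiagonal, $w=\sum_{k=1}^{n-1}\gamma_k e_{k+1,k}$. The interior coefficients are irrelevant; only the two extreme ones matter, and they are easy to read off: $\gamma_1=1$ (arising only from the $e_{n-1,1}$-part at $i=0$) and $\gamma_{n-1}=(-1)^{n-3}=-1$ (arising only from the $e_{n,2}$-part at $i=n-3$).

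Now I would bracket $w$ with $y$ once more. Since $w$ sits on off-diagonal $-1$ and $y$ on off-diagonal $2-n$, the bracket $[w,y]$ lands on off-diagonal $1-n$, whose only matrix unit is $e_{n,1}$; thus $[w,y]$ is automatically a scalar multiple of $e_{n,1}$. Tracking which summands of $w$ can feed the $(n,1)$-entry, only $k=n-1$ (against $e_{n-1,1}$) and $k=1$ (against $e_{n,2}$) contribute, giving $[w,y]=(\gamma_{n-1}-\gamma_1)e_{n,1}=-2e_{n,1}$. Hence $e_{n,1}=-\tfrac{1}{2}\bigl[[x^{n-3},y],y\bigr]\in\g$, valid for all even $n\geq4$. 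With $e_{n,1},x\in\g$, Proposition \ref{lem:1} (even case) yields $\langle x,e_{n,1}\rangle=\a$, so $\a\subseteq\g$. Finally, $y\in\g\setminus\a$ (as $\varphi(y)=-y\neq y$), so $\g$ strictly contains $\a$; since $\a=\mathfrak{sp}(n,K)$ is a maximal subalgebra of $\sl(n,K)$, this forces $\g=\sl(n,K)$. If one prefers to avoid the maximality statement, the $\varphi$-invariance of $\g$ gives $\g=\a\oplus(\g\cap\mathfrak{m})$, where $\mathfrak{m}$ is the $(-1)$-eigenspace of $\varphi$; here $\g\cap\mathfrak{m}$ is a nonzero $\a$-submodule (it contains $y$) of $\mathfrak{m}\cong\Lambda^2_0 V$ with $V=K^n$, which is irreducible over $\mathfrak{sp}(n,K)$, so $\g\cap\mathfrak{m}=\mathfrak{m}$ and again $\g=\sl(n,K)$.

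I expect the only delicate point to be the middle computation — verifying that the two boundary coefficients of the subdiagonal element $[x^{n-3},y]$ are precisely $\gamma_1=+1$ and $\gamma_{n-1}=-1$, so that $[w,y]=-2e_{n,1}\neq0$ in characteristic $0$. Everything downstream is either the already-proved Proposition \ref{lem:1} or a standard structural fact (maximality of $\mathfrak{sp}$ in $\sl$, equivalently irreducibility of the isotropy module $\mathfrak{m}$).
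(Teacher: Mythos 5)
Your proof is correct, but it takes a genuinely different route from the paper's in both halves. For the first step, you extract $e_{n,1}$ in a single stroke as $-\tfrac12\bigl[[x^{n-3},y],y\bigr]$, using the grading argument that the bracket must land on the extreme off-diagonal and that only the two boundary coefficients $\gamma_1=1$, $\gamma_{n-1}=-1$ of the subdiagonal element $[x^{n-3},y]$ contribute; I checked this (including the edge case $n=4$, where $[x,y]=e_{2,1}-e_{4,3}$ and $[[x,y],y]=-2e_{4,1}$) and it is right. The paper instead first isolates $e_{n-1,1}$ from $[h_0,y]$ with $h_0=[x^{n-2},y]$ diagonal, and only then brackets with $[x^{n-3},y]$ to reach $e_{n,1}$ --- one more step, but it keeps everything inside the same $C(s,i)$ bookkeeping. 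The real divergence is in the second half: once $\a=\mathfrak{sp}(n,K)=\langle x,e_{n,1}\rangle\subseteq\g$ is known from Proposition \ref{lem:1}, you finish by observing $\varphi(y)=-y$, so $\g$ is $\varphi$-stable and strictly larger than $\a$, and then invoke maximality of $\mathfrak{sp}(n,K)$ in $\sl(n,K)$ (equivalently, irreducibility of the $(-1)$-eigenspace $\mathfrak{m}\cong\Lambda^2_0V$ as an $\mathfrak{sp}$-module, valid since $m\geq 2$). The paper avoids importing this structural fact: it uses $h_0$ to split each $e_{r,r+1}+e_{n-r,n-r+1}$ into the individual root vectors $e_{r,r+1}$ (the sign $(-1)^{r-1}C(n-1,r)(e_{r,r+1}-e_{n-r,n-r+1})$ works precisely because $n$ is even), recovers $e_{m,m+1}$ from the proof of Proposition \ref{lem:1}, and concludes with Lemma \ref{lem:0}. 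Your argument is shorter and conceptually cleaner, at the price of an external representation-theoretic input; the paper's is longer but entirely self-contained and explicit, which fits its overall style of producing concrete root vectors.
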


\begin{proof}
  For integers $s\geq 0$ and $i$ define $C(s,i) = \binom{s}{i}-\binom{s}{i-1}$.
  So $C(s,0)=1$, $C(s,s+1)=-1$ and $C(s,i)=0$ if $i<0$ or $i>s+1$. Furthermore,
  $C(s,i)+C(s,i-1) = C(s+1,i)$.

  By induction on $s$ it follows that
  $$[x^s,y] = \sum_{i=\max(0,s-n+2)}^{\min(s+1,n-1)} (-1)^i C(s,i) e_{n-s+i-1,i+1}.$$
  Set $h_0 = [x^{n-2},y]$ then
  $$h_0 = \sum_{i=0}^{n-1} (-1)^i C(n-2,i) e_{i+1,i+1}.$$
  Then $[h_0,y] = (n-2)(-e_{n-1,1}+e_{n,2})$. Hence $z=e_{n-1,1}\in \g$.
  Furthermore we set
  $$w = [x^{n-3},y] = \sum_{i=0}^{n-2} (-1)^i C(n-3,i) e_{i+2,i+1}.$$
  Then $[w,z] = C(n-3,n-2)e_{n,1}$. Hence $e_{n,1}\in \g$. So from the
  proof of Proposition \ref{lem:1} it follows that $e_{r,r+1}+e_{n-r,n-r+1}\in \g$ for
  $1\leq r<  \tfrac{n}{2}$. Using the hypothesis that $n$ is even we compute
  \begin{align*}
    [h_0,e_{r,r+1}+e_{n-r,n-r+1}]  &= (-1)^{r-1} \left( C(n-1,r)e_{r,r+1}+
    C(n-1,n-r) e_{n-r,n-r+1}\right) \\
    &= (-1)^{r-1}C(n-1,r) (e_{r,r+1}-e_{n-r,n-r+1}).\\
  \end{align*}
  It follows that $e_{r,r+1}, e_{n-r,n-r+1} \in \g$ for $1\leq r < \tfrac{n}{2}$.
  (Note that $C(n-1,r)\neq 0$ for $r < \tfrac{n}{2}$.)
  In the proof of Proposition \ref{lem:1} we have also seen that $e_{m,m+1}\in \g$,
  where $n=2m$. By Lemma \ref{lem:0} it now follows that $\g=\sl(n,K)$.
\end{proof}  

\begin{proposition}\label{lem:1B}
  Let $n=2m+1\geq 5$ be odd, 
  $x = \sum_{i=1}^{n-1} e_{i,i+1}$, $y=e_{n-1,1}+e_{n,2}$. Let $\g$ denote the
  Lie algebra generated by $x,y$. Then $\g=\mathfrak{so}(n,K)$ if $n\neq 7$
  and $\g$ is simple of type $G_2$ if $n=7$.
\end{proposition}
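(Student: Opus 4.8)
The plan is to realize $\g$ inside an orthogonal Lie algebra and then peel off, one at a time, the canonical generators of type $B_m$ (write $n=2m+1$). First I would fix the symmetric form whose Gram matrix is $J=\sum_{i=1}^n(-1)^{i-1}e_{i,n+1-i}$; since $n$ is odd one checks that $J^2$ is the identity, so $\sigma(z)=-Jz^\top J$ is an involutory automorphism of $\sl(n,K)$ whose fixed-point algebra is $\mathfrak{so}(n,K)=\{z:z^\top J+Jz=0\}$. A direct computation (again using that $n$ is odd) gives $\sigma(e_{i,i+1})=e_{n-i,n-i+1}$, whence $\sigma(x)=x$, and likewise $\sigma(e_{n-1,1})=e_{n,2}$, so $\sigma(y)=y$. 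Thus $x,y\in\mathfrak{so}(n,K)$ and therefore $\g\subseteq\mathfrak{so}(n,K)$, a simple Lie algebra of type $B_m$.

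Next I would pin down the target generators inside this matrix model. Put $x_r=e_{r,r+1}+e_{n-r,n-r+1}$ for $1\le r\le m-1$ and $x_m=e_{m,m+1}+e_{m+1,m+2}$; together with the opposite vectors $y_r$ and the diagonal elements $h_r=[x_r,y_r]$ these satisfy the relations of a canonical generating set of $\mathfrak{so}(n,K)$ of type $B_m$ (long nodes $1,\dots,m-1$, short node $m$), exactly as in the $C_m$ computation of Proposition \ref{lem:1}. Two facts then drive everything: $x=\sum_{r=1}^m x_r$, so $x$ is the sum of the simple root vectors, and $y=e_{n-1,1}+e_{n,2}$ spans the lowest root space $\g_{-\alpha_0}$ of $B_m$ (both of its matrix units have the same weight, namely $-\alpha_0$ for the highest root $\alpha_0$, and $\sigma$ fixes their sum). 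By Lemma \ref{lem:0} it therefore suffices to show that each $x_r$ lies in $\g$.

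To separate the $x_r$ I would use the Cartan element $h_0=[x^{n-2},y]=\sum_{i=1}^n(-1)^{i-1}C(n-2,i-1)\,e_{i,i}\in\g$, with $C(s,i)=\binom{s}{i}-\binom{s}{i-1}$ as in Proposition \ref{lem:1A}. Unlike the even case treated there, here $[h_0,y]$ is merely a scalar multiple of $y$ (a reflection of $\g\subseteq\mathfrak{so}(n,K)$), so one cannot peel off single matrix units and must instead separate the $x_r$ by their $\operatorname{ad}(h_0)$-eigenvalues. A short calculation—identical for the two matrix units of a given $x_r$, precisely as the symmetry of $\mathfrak{so}(n,K)$ predicts—shows that $\operatorname{ad}(h_0)$ acts on $x_r$ by the scalar $\lambda_r=(-1)^{r-1}C(2m,r)$. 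Hence $[h_0^k,x]=\sum_{r=1}^m\lambda_r^k\,x_r$, and if the $\lambda_r$ are pairwise distinct the Vandermonde system built from $x,[h_0,x],\dots,[h_0^{m-1},x]\in\g$ solves for every $x_r$; then all $x_r\in\g$ and, by Lemma \ref{lem:0}, $\g=\mathfrak{so}(n,K)$.

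The crux—and the sole source of the exceptional case—is the distinctness of $\lambda_1,\dots,\lambda_m$. Since these alternate in sign, a coincidence forces two indices of the same parity with $C(2m,i)=C(2m,j)$; as $C(2m,\cdot)$ is unimodal on $\{1,\dots,m\}$ and $C(2m,m)=\operatorname{Cat}_m$ is the Catalan number, the only such collision for $m\ge 2$ is $C(6,1)=C(6,3)=5$, i.e. $m=3$, $n=7$. Proving that this ballot-number coincidence occurs for no other $m$ is the main technical obstacle; granting it, $n\neq 7$ yields $\g=\mathfrak{so}(n,K)$. For $n=7$ we have $\lambda_1=\lambda_3$, so the construction only produces $x_2$ and $x_1+x_3$, and $\g$ is a proper subalgebra. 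I would finish this case by exhibiting the standard copy of $G_2$ in $\sl(7,K)$, checking that it contains $x$ and $y$ (so $\g\subseteq G_2$) and that $x_2$, $x_1+x_3$ together with $y$ already include a canonical generating set of $G_2$; the remaining finite verification that $\g$ is the $14$-dimensional simple Lie algebra of type $G_2$ is then a direct computation in {\sf GAP}4 \cite{GAP4}.
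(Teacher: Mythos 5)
Your overall strategy is sound and genuinely different from the paper's: you reduce everything to a single Vandermonde argument by diagonalizing $\operatorname{ad}(h_0)$ on the span of the simple root vectors $x_1,\dots,x_m$, and I have checked that the eigenvalue computation is right (both matrix units of $x_r$ do carry the eigenvalue $\lambda_r=(-1)^{r-1}C(2m,r)$, using $C(n-2,r-1)+C(n-2,r)=C(n-1,r)$ and the symmetry $C(2m,2m+1-r)=-C(2m,r)$). This is more conceptual than the paper's argument, which instead extracts elements by hand (computing $[h_0,[x^2,y]]$, the diagonal elements $w_r=[u,v]$, and then $[w_r,x]$, which yields a \emph{triangular} system $[w_r,x]=\delta_1x_1+\dots+\delta_{r+1}x_{r+1}+\gamma_{n,r}(r+2)x_{r+2}$ requiring only the single leading coefficient $\gamma_{n,r}(r+2)=-C(2n-4-r,n-1)$ to be nonzero, which is checked by the elementary criterion $C(s,i)=0\iff s=2i-1$). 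Your version even explains the $G_2$ exception structurally as the eigenvalue collision $C(6,1)=C(6,3)$, which is a nice bonus the paper does not offer.

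However, there is a genuine gap, and you have named it yourself: the entire argument for $n\neq 7$ rests on the pairwise distinctness of $\lambda_1,\dots,\lambda_m$, i.e.\ on the claim that $C(2m,i)=C(2m,j)$ has no solution with $1\le i<j\le m$, $i\equiv j\pmod 2$, other than $(m,i,j)=(3,1,3)$, and you do not prove this. It does not follow from unimodality plus $C(2m,m)=\mathrm{Cat}_m$ as you suggest: unimodality still permits one cross-peak collision per value, and such collisions genuinely occur among these ballot numbers (e.g.\ $C(14,5)=C(14,6)=1001$; only the sign alternation saves you there), so the statement you need is delicate and requires a real estimate or identity, not an appeal to shape. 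Until that combinatorial fact is established, the proof is incomplete precisely at its crux. If you want to keep the Vandermonde framework, one fix is to replace the global distinctness claim by the paper's triangular extraction (which only ever needs one coefficient at a time to be nonzero); otherwise you must supply a proof that the only same-parity coincidence among $C(2m,1),\dots,C(2m,m)$ occurs at $m=3$.
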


\begin{proof}
  The proof for $n=7$ is a direct computation with {\sf GAP}. So in the
  remainder of the proof we assume $n\neq 7$.
  
  We consider the diagram automorphism $\varphi$ as in the proof of Proposition
  \ref{lem:1}.
  Since $n$ is odd it leaves the generators $x,y$ invariant. It is known
  that $\a = \{ z \in \sl(n,K) \mid \varphi(z)=z\}$
  is a simple Lie algebra of type $B_m$ (see \cite{kac} \S 7.9). Hence
  $\a$ is $\mathfrak{so}(n,K)$ and $\g\subset  \mathfrak{so}(n,K)$.

  Also, $y$ is a root vector of $\mathfrak{so}(n,K)$ corresponding to the
  lowest root. Simple root vectors are $x_i = e_{i,i+1}+e_{n+1-i,n+2-i}$ for
  $1\leq i\leq m$. We show that $x_i\in \g$ for $1\leq i\leq m$. By Lemma
  \ref{lem:0} this finishes the proof.

  We have the same formula for $[x^s,y]$ as in the proof of the previous lemma.
  For $s=2$ that gives $[x^2,y] = e_{n-3,1}-e_{n-2,2}-e_{n-1,3}+e_{n,4}$.
  Again set $h_0 = [x^{n-2},y]$, then using the same expression for $h_0$ as
  in the proof of the previous lemma, and some manipulation,
  $$[h_0,[x^2,y]] = (C(n-2,3)-1)(e_{n-3,1}+e_{n,4}) + (C(n-2,2)-C(n-2,1))
  (e_{n-2,2}+e_{n-1,3}).$$
  If $n=5$ these coefficients are $-3$, $-2$, whereas for $n\geq 9$ they are
  both positive. 
  Together with the expression for $[x^2,y]$ this shows that
  $e_{n-3,1}+e_{n,4}$ and $e_{n-2,2}+e_{n-1,3}$ lie in $\g$. (For $n=7$ the
  coefficients are $-1$, $1$ and we see that the proof goes wrong here;
  unsurprising, as in this case the resulting Lie algebra is of type $G_2$.)

  We have $[x^{2n-4},y] = -C(2n-4,n-2)(e_{1,n-1}+e_{2,n})$ so that
  $e_{1,n-1}+e_{2,n}\in \g$. Furthermore, $[e_{1,n-1}+e_{2,n},e_{n-2,2}+e_{n-1,3}]
  =e_{1,3}-e_{n-2,n}$. It follows that $e_{1,3}-e_{n-2,n}\in \g$.
  Set $z=[x^{n-3},y]$ then
  $$z = \sum_{j=1}^{n-1} (-1)^{j-1} C(n-3,j-1) e_{j+1,j}.$$
  Now $[z,e_{1,3}-e_{n-2,n}] = e_{2,3}+e_{n-2,1}+ (n-4)(e_{1,2}+e_{n-1,n})$.

  For $0\leq r\leq m-2$ set $u=[x^{2n-4-r},y]$, $v=[x^r,y]$. Then
  \begin{align*}
    u &= \sum_{j=1}^{r+2} (-1)^{n-r-3+j} C(2n-4-r,n-r-3+j)e_{j,n-r-2+j}\\
    v &= \sum_{j=1}^{r+2} (-1)^{j-1} C(r,j-1) e_{n-r+j-2,j}.
  \end{align*}
  Then
  $$[u,v] = (-1)^{n-r} \sum_{j=1}^{r+2} C(r,j-1)C(2n-4-r,n-3-r+j) (e_{j,j}-e_{n+1-j,
    n+1-j}).$$
  Write $\gamma_{n,r}(j) = C(r,j-1)C(2n-4-r,n-3-r+j)$. Then it follows that
  $w_r =\sum_{j=1}^{r+2} \gamma_{n,r}(j)(e_{j,j}-e_{n+1-j,n+1-j}) \in\g$.
  After some manipulation it is seen that $w_0 = C(2n-4,n-2)( e_{1,1}-e_{n,n}
  +e_{2,2}-e_{n-1,n-1})$. Hence $a= e_{1,1}-e_{n,n}+e_{2,2}-e_{n-1,n-1}\in \g $.
  Now $[a,x] = e_{2,3}+e_{n-2,n-1}$. So with the above computation of
  $[z,e_{1,3}-e_{n-2,n}]$ we see that $e_{1,2}+e_{n-1,n}$ and 
  $e_{2,3}+e_{n-2,n-1}$ lie in $\g$.

  With $x_i$ as above we have
  $$[w_r,x] = \gamma_{n,r}(1) x_1 +\sum_{j=2}^{r+2} \gamma_{n,r}(j) (x_j-x_{j-1}).$$
  So $[w_r,x] = \delta_1 x_1+\cdots +\delta_{r+1} x_{r+1} +\gamma_{n,r}(r+2)
  x_{r+2}$, where the $\delta_i$ are some coefficients. We have
  $\gamma_{n,r}(r+2) = -C(2n-4-r,n-1)$. In general, for $1\leq i\leq s-1$
  we have $C(s,i)=0$ if and only if $s=2i-1$. Furthermore, $2n-4-r = 2(n-1)-1$
  is the same as $r=-1$, which is not the case. So the coefficient of
  $x_{r+2}$ is nonzero.
  As we have already shown that $x_1,x_2\in \g$ it now follows
  that $x_i\in \g$ for $1\leq i\leq m$. 
\end{proof}

\begin{theorem}\label{thm:list}
  Let $\g$ be a split simple Lie algebra over a field of characteristic 0.
  Let $x_i,y_i,h_i$ for $1\leq i\leq \ell$ be a canonical generating set of
  $\g$. Let $y$ be a lowest root vector of $\g$ and $x=\sum_{i=1}^\ell x_i$.
  Let $\a$ denote the subalgebra generated by $x,y$. Then $\a$ is simple.
  More precisely we have
  \begin{itemize}
  \item If $\g$ is of type $A_\ell$ with $\ell$ even then $\a=\g$, hence
    $\a$ is of type $A_\ell$.
  \item If $\g$ is of type $A_\ell$ with $\ell$ odd then $\a$ is of type
    $C_m$ with $m=\tfrac{\ell+1}{2}$.
  \item If $\g$ is of type $B_\ell$, $\ell\neq 3$, then $\a=\g$, hence $\a$ is
    of type $B_\ell$.
  \item If $\g$ is of type $B_3$ then $\a$ is of type $G_2$.  
  \item  If $\g$ is of type $C_\ell$ then $\a=\g$, hence $\a$ is of
    type $C_\ell$.
  \item If $\g$ is of type $D_4$ then $\a$ is of type $G_2$.
  \item If $\g$ is of type $D_\ell$ with $\ell\geq 5$ then $\a$ is of type
    $B_{\ell-1}$.
  \item If $\g$ is of type $E_6$ then $\a$ is of type $F_4$.
  \item If $\g$ if of type $E_{7,8}$ then $\a=\g$, hence $\a$ is of type
    $E_7$, $E_8$ respectively.
  \item If $\g$ is of type $F_4$ then $\a=\g$, hence $\a$ is of type $F_4$.
  \item If $\g$ is of type $G_2$ then $\a=\g$, hence $\a$ is of type $G_2$.  
  \end{itemize}  
\end{theorem}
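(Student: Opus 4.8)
The plan is to identify $\a$ case by case with a specific simple Lie algebra; since each such identification exhibits $\a$ as (isomorphic to) a simple Lie algebra, the simplicity assertion comes for free and needs no separate argument. Two reductions make the bookkeeping manageable. First, $x=\sum_i x_i$ is a regular (principal) nilpotent element and $y$ is a lowest root vector, and both the isomorphism type of $\a$ and these two properties are unchanged if we rescale the $x_i$, replace $y$ by a nonzero multiple, or apply an inner automorphism of $\g$. Hence I am free to pass to whatever matrix realization of $\g$ is most convenient. Second, every ``folding'' case will be realized inside $\g^\sigma$ for a diagram automorphism $\sigma$ and thereby reduced to a strictly smaller member of the list, so the only things to compute from scratch are a short list of ``atoms''.

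For the classical series the atoms are already in hand. In type $A_\ell$ take $\g=\sl(\ell+1,K)$, so that $x=\sum_{i=1}^{\ell} e_{i,i+1}$ is the sum of the simple root vectors and $y=e_{\ell+1,1}$ is a lowest root vector; Proposition~\ref{lem:1} (with $n=\ell+1$) gives $\a=\g$ when $\ell$ is even and $\a$ of type $C_{(\ell+1)/2}$ when $\ell$ is odd. The same computation, interpreted inside the resulting $C_m$, also settles type $C_\ell$: there $x$ restricts to the sum of the simple root vectors of $C_\ell$ and $y$ to a lowest root vector, and Proposition~\ref{lem:1} says these generate all of $C_\ell$, so $\a=\g$. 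For type $B_\ell$ I realize $\g=\mathfrak{so}(2\ell+1,K)$ and invoke Proposition~\ref{lem:1B} with $n=2\ell+1$: this gives $\a=\g$ for $\ell\neq 3$ and $\a$ of type $G_2$ for $\ell=3$. Thus the $B_3$ anomaly is precisely the $\mathfrak{so}(7)$ computation already recorded.

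The engine for the remaining cases is a folding argument. Let $\sigma$ be a nontrivial diagram automorphism of $\g$, realized as an actual automorphism compatible with the orbit structure of $\sigma$ on $\Delta$. Then $x=\sum_i x_{\alpha_i}$ is automatically $\sigma$-fixed. Since the highest root $\alpha_0$ is $\sigma$-invariant, $\sigma$ acts on the line $\g_{-\alpha_0}$ by a scalar $\epsilon=\pm 1$, and everything hinges on this sign. If $\epsilon=+1$ then $y\in\g^\sigma$, whence $\a\subseteq\g^\sigma$; moreover inside $\g^\sigma$ the element $x$ is the sum of the simple root vectors (each simple root of the folded system being the $\sigma$-orbit sum in $\Delta$) and $y$ is a lowest root vector, so the already-established case of the list for $\g^\sigma$ forces $\a=\g^\sigma$. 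This yields $D_\ell\to B_{\ell-1}$ for $\ell\geq 5$; for $\ell=4$ the folded algebra is $B_3$, and the $B_3$ atom upgrades the conclusion to $G_2$; it yields $E_6\to F_4$ from the $F_4$ atom; and it reproduces $A_\ell$ odd $\to C_{(\ell+1)/2}$. The contrasting sign $\epsilon=-1$, occurring for $A_{2m}$ (where the highest root vector $e_{1,2m+1}$ is anti-invariant under the automorphism $\varphi$ of Proposition~\ref{lem:1}), is exactly what lets $x,y$ escape $\g^\sigma$ and generate all of $\g$, consistent with the $A_\ell$-even atom.

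What is left are the atoms with no nontrivial diagram automorphism, namely $F_4$, $G_2$, $E_7$ and $E_8$, where the claim is $\a=\g$: these I verify in {\sf GAP} by building $x,y$ from a Chevalley basis, closing under the bracket, and checking that the dimension of the generated subalgebra equals $\dim\g$. I expect the principal difficulty to be twofold. The conceptual obstacle is pinning down the sign $\epsilon$ in each folding case, since $\sigma$ is canonical only up to inner automorphisms yet $\epsilon$ genuinely controls the outcome (it is the sole difference between the $A$-even and $A$-odd behaviors); this is handled by fixing an explicit $\sigma$, namely the matrix map $\varphi$ of Proposition~\ref{lem:1} for types $A$ and $D$ and an explicit Chevalley-basis lift for $E_6$. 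The computational obstacle is the verification of $\a=\g$ for $E_8$, a $248$-dimensional generation check, together with the isolated $\mathfrak{so}(7)\to G_2$ and $D_4\to G_2$ phenomena, all of which I relegate to {\sf GAP}.
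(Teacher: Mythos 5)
Your proposal is correct and follows essentially the same architecture as the paper's proof: the classical atoms come from Propositions \ref{lem:1} and \ref{lem:1B} together with the fact that any two canonical generating sets are related by an automorphism of $\g$ (the paper cites Jacobson, IV, Theorem 3 for this; your reduction via rescalings and inner automorphisms amounts to the same thing, since the rescaling is realized by a torus element that also just rescales $y$), type $D_\ell$, $\ell\geq 5$, is handled by the involutive folding to $B_{\ell-1}$, and the irreducibly exceptional cases go to {\sf GAP}. You diverge in two small but genuine ways. First, you obtain $E_6\to F_4$ by folding from the $F_4$ atom, whereas the paper simply verifies all exceptional cases in {\sf GAP}; your route works, but it obliges you to check that the standard lift $\sigma$ of the $E_6$ diagram symmetry fixes the lowest root vector and that the highest root of $E_6$ restricts to the highest root of $F_4$ (both hold: the $(-1)$-eigenspace of $\sigma$ is the $26$-dimensional $F_4$-module, whose nonzero weights are short roots, while the highest root of $E_6$ folds to the long highest root of $F_4$). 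Second, for $D_4$ you fold by an involution down to $B_3$ and then invoke the $B_3\to G_2$ atom, which is arguably cleaner than the paper's order-$3$ folding combined with the $G_2$ computation (the paper also offers a direct {\sf GAP} check as an alternative). Your explicit isolation of the sign $\epsilon$ on $\g_{-\alpha_0}$ is the one place where care is genuinely needed --- the paper asserts $\phi(y)=y$ for $D_\ell$ without comment --- so provided you actually carry out those sign verifications, the argument is complete.
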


\begin{proof}  
  The statements about the
  exceptional Lie algebras are easily verified in {\sf GAP}.

  The first, second and fifth points are shown in Proposition \ref{lem:1} for a
  particular canonical generating set. However, two different canonical
  generating sets are mapped to each other by an automorphism (\cite{jacobson},
  IV, Theorem 3). Hence these statements hold for any canonical generating set.
  The third and fourth points are shown similarly, using Proposition
  \ref{lem:1B}.

  Suppose that $\g$ is of type $D_\ell$ with $\ell\geq 5$. We consider the
  diagram automorphism $\phi$ of $\g$ which fixes $x_i,y_i,h_i$ for $1\leq i\leq
  \ell-2$, and interchanges $x_{\ell-1},x_\ell$, $y_{\ell-1},y_\ell$,
  $h_{\ell-1},h_\ell$. Then the fixed point subalgebra of $\g$ with respect to
  $\phi$ is of type $B_{\ell-1}$. Moreover, $x_1,\ldots,x_{\ell-2},x_{\ell-1}+
  x_{\ell}$, $y_1,\ldots,y_{\ell-2},y_{\ell-1}+y_{\ell}$, $h_1,\ldots,h_{\ell-2},
  h_{\ell-1}+h_{\ell}$ is a canonical generating set for this algebra.
  It follows that $x$ is exactly the sum of the positive root vectors in
  this set. Furthermore, $\phi(y)=y$ and $y$ is a lowest root vector of
  the fixed point subalgebra. Therefore by the third statement of the
  proposition it follows that $\a$ is of type $B_{\ell-1}$.

  If $\g$ is of type $D_4$ then we use a similar reasoning. But this time
  we need to take the diagram automorphism of order 3 (along with the
  computation for $G_2$). Alternatively, it can also be established
  with a straightforward computation in {\sf GAP}.
\end{proof}

\section{Free dense subgroups of simple Lie groups}\label{3}

In this section we apply generating sets of size 2 of simple Lie algebras
obtained in Section~\ref{2} to exhibit infinite series of 2-generator free
dense subgroups of rank 2 in $\SL(n, \C)$, $\Sp(n, \C)$ and in the simple
algebraic group of type $G_2$ (in its realization as a subgroup of
$\SL(7,\C)$).

As mentioned before, our subgroups are generated by two elements
$\exp(tx)$, $\exp(sy)$, where $s,t\in \C$ and $x,y$ are nilpotent elements
generating the Lie algebra of the group in question. By the following
proposition these subgroups are Zariski-dense.

\begin{proposition}\label{prop:dense}
  Let $G\subset \GL(n,\C)$ be a (connected) simple algebraic group with Lie
  algebra $\g\subset \mathfrak{gl}(n,\C)$. Let $x,y\in \g$ be nilpotent
  elements generating $\g$. Let $t',s'\in \C$ be nonzero. Then the group
  $H=\langle \exp(t'x),\exp(s'y)\rangle$ is Zariski-dense in $G$.
\end{proposition}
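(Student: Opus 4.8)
The plan is to let $H$ denote the Zariski closure of $H$ in $G$ and to show that $\overline{H}$ is all of $G$ by arguing at the level of Lie algebras. Since $\overline{H}$ is a closed subgroup of the connected simple algebraic group $G$, its identity component $\overline{H}^\circ$ is a connected algebraic subgroup of $G$, and it suffices to prove that the Lie algebra $\h$ of $\overline{H}$ equals $\g$; because $G$ is connected and simple, any connected subgroup with full Lie algebra must be $G$ itself. So the goal reduces to showing $\h=\g$.

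The key observation is that $\h$ must contain $x$ and $y$. Indeed, since $x$ is nilpotent, $\exp(tt'x)$ defines a one-parameter subgroup $t\mapsto \exp(tt'x)$ of $G$ that passes through $\exp(t'x)\in H$ at $t=1$ and through the identity at $t=0$; as a polynomial (hence regular) curve in $G$, its entire image lies in $\overline{H}$. Differentiating this one-parameter subgroup at the identity shows $t'x\in \h$, and since $t'\neq 0$ we get $x\in\h$; the same argument with $s'y$ gives $y\in\h$. Here I use that for a nilpotent matrix $x$ the exponential $\exp(tt'x)$ has polynomial entries in $t$, so the curve is genuinely algebraic and its closure is contained in the closed group $\overline{H}$.

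Now $\h$ is a Lie subalgebra of $\g$ (the Lie algebra of any algebraic subgroup is closed under the bracket), and it contains both $x$ and $y$. By hypothesis $x,y$ generate $\g$ as a Lie algebra, so the smallest Lie subalgebra containing them is all of $\g$; hence $\h=\g$. As noted above this forces $\overline{H}^\circ=G$, and since $G$ is connected we conclude $\overline{H}=G$, i.e.\ $H$ is Zariski-dense in $G$.

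The main obstacle, and the point requiring the most care, is the passage from the discrete generators $\exp(t'x),\exp(s'y)$ to the infinitesimal elements $x,y\in\h$: one must justify that the whole one-parameter subgroup lies in $\overline{H}$ rather than just the single point $\exp(t'x)$. This rests on the nilpotence of $x$ and $y$, which makes the exponentials polynomial maps so that $\{\exp(tt'x)\mid t\in\C\}$ is the image of an irreducible algebraic curve meeting $\overline{H}$ in a Zariski-dense subset (the integer multiples of $t'$), and is therefore contained in $\overline{H}$. Once this is established the remaining steps are formal, relying only on the correspondence between closed subgroups and their Lie algebras together with the generation hypothesis.
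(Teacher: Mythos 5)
Your argument is correct and is essentially the paper's own proof: both show that the full one-parameter subgroup $\{\exp(tx)\mid t\in\C\}$ lies in $\overline{H}$ (you via Zariski-density of $\Z$ in the affine line pulled back along the polynomial map $t\mapsto\exp(tt'x)$, the paper via a dimension count on the closure of the infinite cyclic group $\{\exp(mt'x)\}$), deduce that $x,y$ lie in the Lie algebra of $\overline{H}$, and conclude from the generation hypothesis and connectedness of $G$. The two justifications of the key step are interchangeable standard arguments, so there is nothing substantive to distinguish them.
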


\begin{proof}
  Let $\overline{H}$ denote the Zariski-closure of $H$. Then $\overline{H}$ is
  an algebraic subgroup of $G$ (\cite{borel}, Proposition 1.3). Let
  $U= \{ \exp( mt'x) \mid m\in \Z\}$, then $U$ is an infinite subgroup of
  $H$. Hence $\dim \overline{U} \geq 1$. But $M=\{\exp(tx) \mid t\in \C\}$ is
  a 1-dimensional algebraic subgroup of $G$ (\cite{borel}, \S 7.3). It follows
  that $M=\overline{U}$ so that $M\subset \overline{H}$. But then the Lie
  algebra of $M$ is contained in the Lie algebra of $\overline{H}$. The
  Lie algebra of $M$ is spanned by $x$ (\cite{borel}, \S 7.3). We see that
  $x$ lies in the Lie algebra of $\overline{H}$. By the same reasoning the
  same holds for $y$. It follows that the Lie algebras of $\overline{H}$ and
  $G$ coincide. So because $G$ is connected, it follows that $\overline{H}=G$
  (\cite{borel}, \S 7.1).
\end{proof}

We use the well-known ping-pong lemma to show that a group like the group
$H$ of the previous proposition is free. For its proof we refer to
\cite{lyndonschupp}, Proposition 12.2 of Chapter III.

\begin{lemma}[Ping-Pong Lemma]\label{pingpong}
Suppose a group $G$ acts on a set $X$ and let $H_1$ and $H_2$ be subgroups of $G$.
Suppose there exist nonempty subsets $A$ and $B$ of $X$ such that
\begin{itemize}
\item $B$ is not contained in $A$,
\item $h_1A\subset B$ for each nonidentity $h_1\in H_1$,
\item $h_2B\subset A$ for each nonidentity $h_2\in H_2$.
\end{itemize}
Then the group $H=\langle H_1, H_2\rangle$ is isomorphic to the free product $H_1*H_2$
of groups $H_1$ and $H_2$. In particular, if $H_1$ and $H_2$ are infinite cyclic
groups, $H$ is isomorphic to the free nonabelian group of rank 2.
\end{lemma}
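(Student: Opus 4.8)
The plan is to show that the natural surjection $\pi\colon H_1 * H_2 \to H=\langle H_1,H_2\rangle$ is injective; surjectivity is immediate from the definition of $H$. Every nonidentity element of the free product $H_1*H_2$ is uniquely a \emph{reduced word} $w=s_1 s_2\cdots s_n$ whose syllables $s_i$ are nonidentity elements lying alternately in $H_1$ and $H_2$. Since $H_1$ and $H_2$ inject into $G$, a single-syllable word already maps to a nonidentity element, so it suffices to prove that every reduced word with $n\geq 2$ syllables acts nontrivially on $X$, and is in particular $\neq 1$ in $G$.

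The engine is the hypothesis rewritten as $hA\subseteq B$ for $1\neq h\in H_1$ and $hB\subseteq A$ for $1\neq h\in H_2$. Given a reduced word $w=s_1\cdots s_n$, I feed it a point $p$ chosen in $A$ or $B$ according to the factor of the last syllable $s_n$, so that each successive application lands predictably; tracking $p$ from right to left and using that the syllables alternate, one reads off which of $A,B$ contains $wp$ purely from the factors of $s_1$ and $s_n$. The decisive case is when \emph{both} $s_1$ and $s_n$ lie in $H_2$: then $wB\subseteq A$. If such a $w$ equalled $1$ in $G$ it would act as the identity on $X$, forcing $B=wB\subseteq A$ and contradicting the hypothesis $B\not\subseteq A$. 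Hence every reduced word that begins and ends with a syllable in $H_2$ is $\neq 1$.

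It remains to reduce the general case to this decisive one. Because $w=1$ if and only if $cwc^{-1}=1$ for any $c\in G$, I may replace $w$ by any conjugate, and conjugating by a syllable amounts to cyclically rotating the reduced word. Any reduced word with $n\geq 2$ syllables contains a syllable from $H_2$, so after a cyclic rotation I may assume $w$ ends with an $H_2$-syllable; when $n$ is odd the alternation then forces $w$ to begin with an $H_2$-syllable as well, placing us in the decisive case. When $n$ is even, a further conjugation by a suitably chosen element of $H_2$ prepends a leading syllable in $H_2$ while keeping the trailing syllable in $H_2$, again yielding a reduced nonidentity word that begins and ends in $H_2$. I expect this bookkeeping---keeping the conjugated word reduced and genuinely nonidentity, and in particular avoiding the degenerate collapse of a trailing syllable when the only available conjugators are involutions---to be the main technical obstacle; it is the one point where the detailed structure of $H_2$ must be watched.

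Combining the cases, $\pi$ has trivial kernel and is therefore an isomorphism $H\cong H_1*H_2$. Finally, when $H_1$ and $H_2$ are infinite cyclic the free product $H_1*H_2$ is by definition the free nonabelian group of rank $2$, which gives the last assertion.
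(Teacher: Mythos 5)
The paper gives no proof of this lemma (it defers to Lyndon--Schupp, Prop.\ 12.2 of Ch.\ III), so the comparison is with the standard argument, which is exactly the one you are reproducing: show that the canonical surjection $H_1*H_2\to H$ kills no reduced word by reducing to words that begin and end with an $H_2$-syllable, since such a word $w$ satisfies $wB\subseteq A$ and therefore cannot act as the identity, given $B\not\subseteq A$. Your treatment of this decisive case is correct. (One small point: for an odd-length word whose two end syllables both lie in $H_1$, the clean move is to conjugate by any nonidentity element of $H_2$, which lengthens the word by two and keeps it reduced; a cyclic rotation, as you propose, can merge or cancel the two end syllables, since they lie in the same factor.)

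The step you defer at the end is a genuine gap, and it is not repairable at the stated level of generality. For an even-length reduced word $w=s_1\cdots s_n$ with $s_1\in H_1$ and $s_n\in H_2$, producing a reduced conjugate with both ends in $H_2$ requires an element $h\in H_2$ with $h\neq 1$ and $h\neq s_n$, i.e.\ $|H_2|\geq 3$ (one can instead route the conjugation through $H_1$, but then one needs $|H_1|\geq 3$). Without some such hypothesis the lemma as stated is false: take $X=\{1,2\}$, let $G$ be the group of permutations of $X$, let $H_1=H_2=G$, $A=\{1\}$, $B=\{2\}$. All three bulleted conditions hold, yet $H=\langle H_1,H_2\rangle$ has order $2$ while $H_1*H_2$ is the infinite dihedral group. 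The classical statement accordingly carries the extra hypothesis that at least one of $H_1$, $H_2$ has order at least $3$; once you add it, your conjugation argument closes and the proof is complete. The omission is harmless for the paper's application, where $H_1$ and $H_2$ are infinite cyclic, but your instinct that ``the detailed structure of $H_2$ must be watched'' at this point was correct: that is precisely where the missing hypothesis is consumed.
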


We adhere to the following notation. Let
$x := \displaystyle \sum^{n-1}_{i=1}e_{i,i+1}$, $y := e_{n,1}$, and 
$z := \displaystyle \sum_{i=1}^{n-1} b_i e_{i+1,i}$. Then $x,y$ generate 
$\mathfrak{sl}(n,\C)$ for $n$ odd and they generate
$\mathfrak{sp}(n,\C)$ when $n$ is even (Lemma \ref{lem:1}). 
Furthermore, $x,z$ generate many types
of algebras for varying choices of the $b_i$. For instance, if the $b_i$
are as in Example \ref{exa:3} then $x,z$ generate $\mathfrak{sl}(n,\C)$.

Their exponentials are
\begin{align*}
  a(t) &:= \mathrm{exp}(tx) = 1_n+\displaystyle \sum_{i < j}\frac{t^{j-i}}{(j-i)!}
  e_{i,j},\\
  b(s) &:= \mathrm{exp}(sy) = 1_n + se_{n,1},\\
  c(r) &:= \mathrm{exp}(rz) = 1_n +
  \sum_{j=2}^n \sum_{i=1}^{j-1} c_{j-i,j} \frac{r^{j-i}}{(j-i)!}e_{j,i}
  \text{ where } c_{j-i,j} = \prod_{k=1}^{j-i} b_{j-k}.\\
\end{align*}

We let these act on $\C^n$ from the left. 
This means that we view the
elements of $\C^n$ as column vectors. However, for reasons of convenience,
we write them as row vectors. Let
\begin{align*}
X_1 &:= \{(x_1, \ldots , x_n) \in \C^n |\hspace{2pt} |x_1| > |x_i|, 2 \leq i \leq n\}\\
X_2 & := \{(x_1, \ldots , x_n) \in \C^n |\hspace{2pt} |x_n| > |x_i|, 1 \leq i \leq n-1\}.\\
\end{align*}

\begin{lemma}~\label{a}
There exists $t_0 \in \R$ such that $a(t)^mX_2 \subset X_1$ for all $t\in\C$
with $|t| > t_0$, and $m \in \Z$, $m\neq 0$.
\end{lemma}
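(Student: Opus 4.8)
The plan is to exploit the one-parameter group structure to reduce the statement to a single exponential, and then to compare the polynomial growth rates of the coordinates of $a(\tau)v$ as $|\tau|\to\infty$.

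First I would use that $a(t)^m = \exp(mtx) = a(mt)$, so that, writing $\tau = mt$, it suffices to produce a constant $t_0$ (depending only on $n$) with $a(\tau)X_2 \subset X_1$ whenever $|\tau| > t_0$. Since $m\neq 0$ is an integer we have $|\tau| = |m|\,|t| \geq |t|$, so the hypothesis $|t| > t_0$ forces $|\tau| > t_0$ and thereby covers all the required values of $m$ at once. Next I would observe that both the set $X_2$ and the condition ``$a(\tau)v \in X_1$'' are invariant under scaling $v \mapsto \lambda v$ with $\lambda\neq 0$. Because $v\in X_2$ forces $|v_n| > |v_i| \geq 0$ and hence $v_n \neq 0$, I may normalize so that $|v_n| = 1$, whence $|v_i|\leq 1$ for all $i$. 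This normalization is what turns the non-compact cone $X_2$ into a family on which the bounds are uniform, and it is the key to obtaining a single $t_0$ valid for every $v$.

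Now write $w = a(\tau)v$. Since $(a(\tau))_{i,j} = \tau^{\,j-i}/(j-i)!$ for $j \geq i$ and $0$ otherwise, one has $w_i = \sum_{j=i}^n \frac{\tau^{\,j-i}}{(j-i)!}\, v_j$. The top coordinate $w_1$ contains the term $\frac{\tau^{\,n-1}}{(n-1)!}\, v_n$, which is of degree $n-1$ in $\tau$, whereas for each $i\geq 2$ the quantity $w_i$ is a polynomial in $\tau$ of degree at most $n-i \leq n-2$. Using $|v_n|=1$ and $|v_j|\leq 1$ together with the triangle inequality, I would obtain a lower bound $|w_1| \geq \frac{|\tau|^{\,n-1}}{(n-1)!} - P(|\tau|)$ with $\deg P = n-2$, and upper bounds $|w_i| \leq Q_i(|\tau|)$ with $\deg Q_i \leq n-2$ for $i\geq 2$. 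Comparing leading degrees, the inequality $|w_1| > |w_i|$ holds for every $i\geq 2$ once $|\tau|$ exceeds a threshold determined by these finitely many polynomials; taking $t_0$ to be the largest such threshold gives $w\in X_1$, as desired.

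The only genuine obstacle is securing uniformity over the non-compact cone $X_2$, and the scaling normalization resolves it cleanly. After that the argument reduces to the elementary fact that a degree-$(n-1)$ polynomial with a fixed positive leading coefficient eventually dominates any fixed finite collection of polynomials of degree at most $n-2$, so that $t_0$ can be chosen depending only on $n$.
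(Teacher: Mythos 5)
Your proposal is correct and follows essentially the same route as the paper: reduce to $m=1$ via $a(t)^m=a(mt)$, expand the coordinates of $a(\tau)v$, and compare the degree-$(n-1)$ leading term of the first coordinate against the degree-$\le n-2$ bounds on the others. Your normalization $|v_n|=1$ is just a repackaging of the paper's step of factoring $|x_n|$ out of both sides of the inequalities, so there is no substantive difference.
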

\begin{proof}
Since $a(t)^m = a(mt)$ it is enough to show that there is a $t_0>0$ such that
$a(t)X_2 \subset X_1$ when $|t| > t_0$.

Let $x := (x_1, \ldots , x_n) \in \C^n$. Then $a(t)x=(y_1,\ldots,y_n)$ with
$$y_k = \sum_{i=k}^n \frac{t^{i-k}}{(i-k)!} x_i.$$
Hence $a(t)x \in X_1$ if and only if 
$$|\sum^{n}_{i=1}x_i\frac{t^{i-1}}{(i-1)!}| > |\sum^{n}_{i=k}x_i
\frac{t^{i-k}}{(i-k)!}|
\text{ for  $2 \leq k \leq n$.} $$ 
Suppose $x \in X_2$. Then 
$$|\sum^{n}_{i=1}x_i\frac{t^{i-1}}{(i-1)!}| \geq 
|x_n|\frac{|t|^{n-1}}{(n-1)!} -  \sum^{n-1}_{i=1}|x_{i}|
\frac{|t|^{i-1}}{(i-1)!} > 
|x_n|(\frac{|t|^{n-1}}{(n-1)!} - \sum^{n-1}_{i=1}\frac{|t|^{i-1}}{(i-1)!}).$$
On the other hand, 
$$|\sum^{n}_{i=k}x_i\frac{t^{i-k}}{(i-k)!}| \leq
|x_n|\sum^{n}_{i=k}\frac{|t|^{i-k}}{(i-k)!} \leq
|x_n|\sum^{n-1}_{i=1}\frac{|t|^{i-1}}{(i-1)!}.$$
Hence if $t$ satisfies inequality
\begin{equation}\label{eqn:t}
\frac{|t|^{n-1}}{(n-1)!} - 2\sum^{n-1}_{i=1}\frac{|t|^{i-1}}{(i-1)!} > 0  
\end{equation}
then $a(t)x \in X_1$. 

Since the term with the highest exponent of $|t|$ occurs on the left with
positive coefficient, there exists a $t_0$ such that the inequalities all
hold for $|t|>t_0$.
\end{proof}

\begin{example}
Using \eqref{eqn:t} we can derive explicit values of $t_0$. 
For example, if $n = 2$ then \eqref{eqn:t} is $|t| - 2 > 0$.
Hence we can take $t_0=2$.
If $n = 3$ then \eqref{eqn:t} is $\frac{|t|^2}{2} -2(|t|+1) > 0$, i.e. we can
take $t_0 = 1+\sqrt{3}$. For $n = 4$ we have $\frac{|t|^3}{3!} -
2(\frac{|t|^2}{2} + |t|+1) > 0$, i.e. $|t|^3 -6|t|^2-12|t|-12 > 0$, 
so $t_0 = 7.8$ would do.
\end{example}

\begin{lemma}\label{b}
Set $s_0=2$. If $s\in \C$ is such that   
$|s| > s_0$ then $b(s)^mX_1 \subset X_2$ for all $m \in \Z$, $m\neq 0$.
\end{lemma}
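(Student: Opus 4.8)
The plan is to exploit the very simple form of $b(s)=1_n+se_{n,1}$ and reduce the whole statement to a single coordinate estimate. First I would note, exactly as in the proof of Lemma~\ref{a}, that the relation $b(s)^m=b(ms)$ lets me replace the pair $(s,m)$ by the single scalar $ms$; since $m\neq 0$ and $|s|>2$ we have $|ms|=|m|\,|s|>2$, and this is the only feature of $s$ and $m$ that the argument will use. Next I would compute the action of $b(ms)$ on a column vector $x=(x_1,\ldots,x_n)$: because $b(ms)$ differs from the identity only in the entry $(n,1)$, it fixes every coordinate except the last, which becomes $x_n+ms\,x_1$. Thus $b(s)^m x=(x_1,\ldots,x_{n-1},\,x_n+ms\,x_1)$.

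Now I would take $x\in X_1$, so that $|x_1|>|x_i|$ for $2\le i\le n$; in particular $|x_1|>0$, and $|x_1|$ is the strict maximum of all the moduli, including $|x_n|$. Since the first $n-1$ coordinates of $b(s)^m x$ are left unchanged, the largest of their moduli is still $|x_1|$, so to place $b(s)^m x$ in $X_2$ it suffices to prove the single inequality $|x_n+ms\,x_1|>|x_1|$. For this I would apply the reverse triangle inequality, $|x_n+ms\,x_1|\ge |m|\,|s|\,|x_1|-|x_n|$ (valid since $|m|\,|s|\,|x_1|>2|x_1|>|x_n|$), and then use $|x_n|<|x_1|$ together with $|m|\,|s|>2$ to conclude $|x_n+ms\,x_1|>(|m|\,|s|-1)|x_1|>|x_1|$.

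The computation is short, so the one point requiring care is the bookkeeping of strictness. The value $s_0=2$ is chosen precisely so that the amplification factor $|m|\,|s|-1$ exceeds $1$ even in the worst case $m=\pm 1$; and one must verify that the strict inequality survives the step where $x_n$ could partially cancel $ms\,x_1$. This is guaranteed because membership $x\in X_1$ forces $|x_n|<|x_1|$ \emph{strictly} (using $n\ge 2$), so the subtracted term $|x_n|$ is never large enough to degrade the estimate to an equality. I expect no genuine obstacle here beyond this careful tracking of which inequalities are strict.
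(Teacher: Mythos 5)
Your proposal is correct and follows essentially the same route as the paper: reduce to $m=1$ via $b(s)^m=b(ms)$, observe that $b(s)$ changes only the last coordinate to $sx_1+x_n$, and bound it below by $(|s|-1)|x_1|>|x_1|$ using the reverse triangle inequality and $|x_n|<|x_1|$. The only (immaterial) difference is that you carry the factor $m$ explicitly rather than absorbing it into $s$ at the outset.
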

\begin{proof}
Similar to the proof of Lemma~\ref{a}, it is enough to show that $b(s)X_1 \subset X_2$ for $|s| > 2$. 

Let $x = (x_1, \ldots , x_n) \in \C^n$. Then 
$$b(s)x = (x_1, x_2, \ldots , x_{n-1}, sx_1 + x_n).$$
Suppose $x \in X_1$, i.e. $|x_1| > |x_i|$, $2 \leq i \leq n$. 
We have $$|sx_1 + x_n| \geq |s||x_1| - |x_n| > |s||x_1| - |x_1| = |x_1|(|s|-1).$$ 
Now if $|s| > 2$ then $|x_1|(|s|-1) > |x_1| > |x_i|$, $2 \leq i \leq n-1$, i.e. $b(s)x \in X_2$, as required.
\end{proof}

\begin{lemma}~\label{c}
There is an $r_0>0$ such that for all $r\in \C$ with $|r|>r_0$ we have  
$c(r)^mX_1 \subset X_2$ for all $m \in \Z$, $m\neq 0$.
\end{lemma}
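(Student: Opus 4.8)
The plan is to mirror the proof of Lemma~\ref{a} exactly, exploiting that $c(r)$ is lower-triangular with unit diagonal and that its off-diagonal structure is the reflection across the anti-diagonal of the one appearing for $a(t)$. The roles of $X_1$ and $X_2$, and of the first and last coordinates, are interchanged accordingly. First I would reduce to the case $m=1$. Since $c(r)=\exp(rz)$ with $z$ nilpotent, we have $c(r)^m=\exp(mrz)=c(mr)$. Hence it suffices to produce an $r_0>0$ with $c(\rho)X_1\subset X_2$ for every $\rho\in\C$, $|\rho|>r_0$: then for any nonzero $m\in\Z$ and $|r|>r_0$ we have $|mr|=|m|\,|r|\geq|r|>r_0$, so $c(r)^mX_1=c(mr)X_1\subset X_2$.

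For the case $m=1$, take $x=(x_1,\ldots,x_n)\in X_1$, so $|x_1|>|x_i|$ for $2\leq i\leq n$; in particular $|x_1|>0$. Because $c(r)$ is lower-triangular, its $k$-th coordinate is
\[
(c(r)x)_k = x_k + \sum_{i=1}^{k-1} c_{k-i,k}\,\frac{r^{k-i}}{(k-i)!}\,x_i,
\]
a polynomial in $r$ of degree $k-1$ whose top ($i=1$) term is $c_{k-1,k}\frac{r^{k-1}}{(k-1)!}x_1$. Using $|x_i|\leq|x_1|$ throughout, I would bound the last coordinate below by
\[
|(c(r)x)_n| \geq |x_1|\Bigl(|c_{n-1,n}|\tfrac{|r|^{n-1}}{(n-1)!} - \sum_{i=2}^{n-1}|c_{n-i,n}|\tfrac{|r|^{n-i}}{(n-i)!} - 1\Bigr),
\]
and, for each $k\leq n-1$, bound the $k$-th coordinate above by
\[
|(c(r)x)_k| \leq |x_1|\Bigl(1+\sum_{i=1}^{k-1}|c_{k-i,k}|\tfrac{|r|^{k-i}}{(k-i)!}\Bigr).
\]

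The crucial point, which is where the argument really turns, is that the common factor $|x_1|>0$ cancels, leaving a polynomial inequality in $|r|$ \emph{alone}, independent of the particular $x\in X_1$. The lower bound for $|(c(r)x)_n|$ has degree $n-1$ in $|r|$ with leading coefficient $|c_{n-1,n}|/(n-1)!$, whereas every upper bound for $|(c(r)x)_k|$ with $k\leq n-1$ has degree at most $n-2$. The one fact that must be verified is that this leading coefficient does not vanish, i.e. $c_{n-1,n}=\prod_{k=1}^{n-1}b_{n-k}=b_1b_2\cdots b_{n-1}\neq 0$, which holds precisely because all $b_i$ are nonzero by hypothesis. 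Granting this, the difference between the lower bound for the $n$-th coordinate and the upper bound for the $k$-th coordinate is a polynomial in $|r|$ of degree $n-1$ with positive leading term, hence eventually positive; taking the maximum of the finitely many resulting thresholds over $1\leq k\leq n-1$ yields a single $r_0$, and for $|r|>r_0$ we get $|(c(r)x)_n|>|(c(r)x)_k|$ for all $k<n$, i.e. $c(r)x\in X_2$. I expect the only real obstacle to be the bookkeeping needed to confirm that the dominant power $|r|^{n-1}$ genuinely lives in the $n$-th coordinate and nowhere else; this is exactly the reflected analogue of why the first coordinate dominates in Lemma~\ref{a}, so no new idea beyond the reflection and the nonvanishing of the corner coefficient $c_{n-1,n}$ is required.
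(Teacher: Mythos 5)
Your proposal is correct and follows essentially the same route as the paper's proof: reduce to $m=1$ via $c(r)^m=c(mr)$, bound the $n$-th coordinate below and the remaining coordinates above by polynomials in $|r|$ with the common factor $|x_1|$ cancelling, and observe that the degree-$(n-1)$ term with coefficient $|c_{n-1,n}|/(n-1)!>0$ dominates. Your explicit remark that $c_{n-1,n}=b_1\cdots b_{n-1}\neq 0$ is the one point the paper leaves implicit, and it is a worthwhile addition.
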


\begin{proof}
The proof is similar to the one of Lemma~\ref{a}. Let $x=(x_1,\ldots,x_n)\in
\C^n$. Then $c(r)x =(y_1,\ldots,y_n)$ with
$$y_j = \sum_{i=1}^j x_i c_{j-i,j}\frac{r^{j-i}}{(j-i)!}.$$
Suppose that $x\in X_1$. Then
\begin{align*}
|\sum_{i=1}^n x_i c_{n-i,n}\frac{r^{n-i}}{(n-i)!}| &\geq
|x_1|\frac{|r|^{n-1}}{(n-1)!}|c_{n-1,n}| -\sum_{i=2}^n |x_i| \frac{|r|^{n-i}}{(n-i)!}
|c_{n-i,n}| \\ &\geq |x_1| (\frac{|r|^{n-1}}{(n-1)!}|c_{n-1,n}| -
\sum_{i=2}^n  \frac{|r|^{n-i}}{(n-i)!}|c_{n-i,n}|).
\end{align*}
Furthermore,
$$|\sum_{i=1}^j x_i c_{j-i,j}\frac{r^{j-i}}{(j-i)!}| \leq |x_1| \sum_{i=1}^j
|c_{j-i,j}|\frac{|r|^{j-i}}{(j-i)!}.$$
So we require $r$ to satisfy the inequalities

\begin{equation}\label{eqn:r}
  \frac{|r|^{n-1}}{(n-1)!}|c_{n-1,n}| - \sum_{i=2}^n  \frac{|r|^{n-i}}{(n-i)!}
  |c_{n-i,n}| >
  \sum_{i=1}^j |c_{j-i,j}|\frac{|r|^{j-i}}{(j-i)!}
\end{equation}
for $1\leq j\leq n-1$.
Again we see that there is an $r_0$ such that these equalities are satisfied
when $r>r_0$.
\end{proof}

\begin{example}
Also here we can determine explicit values for $r_0$. Let $n=4$, and let
$b_1,b_2,b_3$ be as in Example \ref{exa:3}. Then the inequalities
\eqref{eqn:r} are $224|r|^3-84|r|^2-14|r|-2>0$, $224|r|^3-84|r|^2-22|r|-2>0$,
$224|r|^3-132|r|^2-26|r|-2>0$. The polynomials on the left-hand side each
have one real root, which is approximately, respectively, $0.53$, $0.57$,
$0.76$. Hence we could take $r_0=1$. 
\end{example}

\begin{proposition}\label{free}
Define $H_n(t,s) := \langle a(t), b(s)\rangle$. Let $t_0$ be as in Lemma \ref{a}
and $s_0$ as in Lemma \ref{b}. Then for $t,s\in\C$ with $|t| > t_0$, $|s| > s_0$
we have that $H_{2m}(t,s)$ and $H_{2m+1}(t,s)$ are free dense subgroups of rank
2 of $\Sp(2m, \C)$ and $\SL(2m+1, \C)$ respectively.
\end{proposition}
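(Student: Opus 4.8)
The plan is to treat the two assertions of the proposition---Zariski density and freeness of rank $2$---separately, since the real work has already been packaged into the preceding lemmas and the only task is to assemble them and check that the hypotheses line up.

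First I would dispose of density uniformly in both parities. By Proposition~\ref{lem:1} the nilpotent elements $x=\sum_{i=1}^{n-1}e_{i,i+1}$ and $y=e_{n,1}$ generate $\mathfrak{sp}(n,\C)$ when $n=2m$ is even and $\sl(n,\C)$ when $n=2m+1$ is odd; these are exactly the Lie algebras of the connected simple algebraic groups $\Sp(2m,\C)$ and $\SL(2m+1,\C)$. Since $|t|>t_0$ and $|s|>s_0=2$ force $t$ and $s$ to be nonzero, Proposition~\ref{prop:dense} applies directly and gives that $H_n(t,s)=\langle \exp(tx),\exp(sy)\rangle=\langle a(t),b(s)\rangle$ is Zariski-dense in $\Sp(2m,\C)$, respectively $\SL(2m+1,\C)$.

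Next I would prove freeness by invoking the Ping-Pong Lemma (Lemma~\ref{pingpong}) for the action of $\GL(n,\C)$ on $X=\C^n$, with $H_1=\langle a(t)\rangle$, $H_2=\langle b(s)\rangle$, and the crucial choice of ping-pong sets $A=X_2$, $B=X_1$. I would then verify the three hypotheses in turn. The condition that $B$ is not contained in $A$ is witnessed by the vector $(2,1,\ldots,1)$, which lies in $X_1$ but not in $X_2$. Every nonidentity element of $H_1$ is of the form $a(t)^m=a(mt)$ with $m\neq 0$, so Lemma~\ref{a} gives $a(t)^m X_2\subset X_1$, that is $h_1 A\subset B$; and every nonidentity element of $H_2$ is $b(s)^m$ with $m\neq 0$, so Lemma~\ref{b} gives $b(s)^m X_1\subset X_2$, that is $h_2 B\subset A$. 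Lemma~\ref{pingpong} then yields $H_n(t,s)\cong H_1\ast H_2$. To pin down that the rank is exactly $2$, I would note that since $x$ and $y$ are nonzero nilpotent, $a(t)^m=\exp(mtx)$ and $b(s)^m=\exp(msy)$ are distinct for distinct $m$ whenever $t,s\neq 0$, so $H_1\cong\Z$ and $H_2\cong\Z$; the final clause of Lemma~\ref{pingpong} then makes $H_n(t,s)$ free nonabelian of rank $2$.

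Granting Lemmas~\ref{a} and~\ref{b}, the argument is essentially bookkeeping, and the only points requiring care are the correct pairing of $(A,B)$ with $(H_1,H_2)$ so that the two covering inclusions point in the right directions, and the infinite-cyclicity of the two cyclic factors. Were those lemmas not already in place, the genuine obstacle would lie precisely there: constructing the cones $X_1,X_2$ and proving the inclusions $a(t)^m X_2\subset X_1$ and $b(s)^m X_1\subset X_2$ \emph{uniformly} in all $m\neq 0$ (equivalently, for a single $a(t)$, $b(s)$ of large modulus, using $a(t)^m=a(mt)$), which is what forces the thresholds $t_0$ and $s_0$ on $|t|$ and $|s|$.
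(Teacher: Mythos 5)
Your proposal is correct and follows exactly the paper's route: density via Proposition \ref{lem:1} combined with Proposition \ref{prop:dense}, and freeness via the ping-pong lemma applied to Lemmas \ref{a} and \ref{b} with the sets $X_1,X_2$. The paper's own proof is just a two-sentence assembly of these ingredients; you supply the same assembly with the hypotheses (the witness for $X_1\not\subset X_2$, the identification of nonidentity powers, and the infinite-cyclicity of the two factors) checked explicitly, which is entirely consistent with the intended argument.
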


\begin{proof}
In view of Lemmas \ref{a}, \ref{b}, the ping-pong lemma (Lemma \ref{pingpong})
shows that $H_n(t,s)$ is free of rank 2. In view of Proposition \ref{lem:1},
Proposition \ref{prop:dense} shows that $H_n(t,s)$ is dense in $\Sp(n,\C)$ if
$n$ is even and in $\SL(n,\C)$ when $n$ is odd.
\end{proof}

\begin{remark}
Proposition~\ref{free} implies that $H_2(t,s)$ is free for $|t|>2, |s|>2$ (see Section~\ref{1}).
\end{remark}

We can consider the group $L_n(t,r)$ generated by $a(t)$, $c(r)$ (for various values of the
$b_i$) and use the same arguments (replacing Lemma \ref{b} with Lemma \ref{c})
to get free dense subgroups in various other algebraic groups. For example,
with the $b_i$ as in Example \ref{exa:3} we get free dense subgroups of
$\SL(n,\C)$ for every $n\geq 3$. The next example makes this precise for
$n=4$. Subsequently we have an example showing how in the same way we get a
free dense subgroup in the simple algebraic group of type $G_2$ (in its
representation as a group of $7\times 7$-matrices). 

\begin{example}~\label{exa:10}
Let $n=4$ and $b_1=8$, $b_2=12$, $b_3=14$ as in Example \ref{exa:3}.
Consider the $4\times 4$-matrices
$$\begin{pmatrix} 1 & t & \tfrac{1}{2} t^2 & \tfrac{t^3}{6}\\
  0 & 1 & t & \tfrac{1}{2} t^2 \\
  0 & 0 & 1 & t \\ 0 & 0 & 0 & 1\end{pmatrix},~~
  \begin{pmatrix} 1 & 0 & 0 & 0 \\ 8r & 1 & 0 & 0 \\ 48r^2 & 12r & 1 & 0 \\
    224r^3 & 84r^2 & 14r & 1 \end{pmatrix}.$$
  Then if $|t|>8$ and $|r|>1$ these generate a free dense subgroup of
  $\SL(4,\C)$. 
\end{example}

\begin{example}
Let $n=7$ and define the following $7\times 7$-matrices
\begin{align*}  
& x_1= e_{23}+e_{56},~ y_1 = e_{32}+e_{65},~ h_1=[x_1,y_1]\\
& x_2=e_{12}+e_{34}+e_{45}+e_{67},~ y_2=e_{21}+2e_{43}+2e_{54}+e_{76},~ h_2=[x_2,y_2].\\
\end{align*}
Let $C=\begin{pmatrix} 2 & -3 \\ -1 & 2 \end{pmatrix}$. Then $x_1,x_2,y_1,y_2,
h_1,h_2$ satisfy the relations of a canonical set of generators of the Lie
algebra of type $G_2$ (see Section \ref{2} for the definition of this concept).
Hence they generate the Lie algebra of type $G_2$. Let $v=(-1,1)$.
Then $Cv = (-5,3)$.
So by Proposition \ref{prop:2}, $x=x_1+x_2$, $z=-y_1+y_2$ generate the Lie
algebra of type $G_2$. The matrix $x$ is of the type considered in Lemma
\ref{a}. The inequality \eqref{eqn:t} becomes
$$|t|^6 -12 |t|^5 -60|t|^4-240 |t|^3-720 |t|^2-1440 |t|-1440>0.$$
The polynomial has one positive root, which is approximately $16.6$. So
we can take $t_0=17$. The matrix $z$ is of the type considered in Lemma \ref{c}.
The inequalities \eqref{eqn:r} consist of six polynomial inequalities. We
will not write down the polynomials here. We found that each of these has a
negative and a positive root. The biggest of the positive roots is
approximately $16.4$. So we can also take $r_0=17$.

The conclusion is that if $|t|,|r|>17$ then $\exp(tx)$, $\exp(rz)$ generate a
free dense subgroup of the algebraic group of type $G_2$.
\end{example}

\begin{corollary}\label{thin}
Let $n>2$ and write $n=2m$ if $n$ is even, and $n=2m+1$ if $n$ is odd.  
Let $t_0, s_0$ be as in Proposition~\ref{free}, and let
$t \in (n-1)!\Z$, $s \in \Z$ be such that $|t|>t_0$ and $|s|>s_0$.
Then $H_{2m+1}(t,s)$  and $H_{2m}(t,s)$ are thin subgroups of
$\SL(2m+1, \Z)$ and $\Sp(2m, \Z)$ respectively.
\end{corollary}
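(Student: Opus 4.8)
The plan is to establish the three defining features of a thin subgroup in turn: that $H$ is a subgroup of the relevant integral group, that it is Zariski-dense in the ambient complex algebraic group, and that it has infinite index. Of these the last is the only substantial point; the first is a short divisibility computation and the other two are already in hand from Proposition \ref{free}.

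First I would check integrality of the generators. The generator $b(s)=1_n+se_{n,1}$ manifestly has integer entries when $s\in\Z$. For $a(t)=1_n+\sum_{i<j}\tfrac{t^{j-i}}{(j-i)!}e_{i,j}$, write $t=(n-1)!\,k$ with $k\in\Z$; then the $(i,j)$-entry equals $k^{j-i}\tfrac{((n-1)!)^{j-i}}{(j-i)!}$, and since $j-i\le n-1$ we have $(j-i)!\mid (n-1)!$, so this is an integer. Both generators have determinant $1$, and when $n=2m$ is even they lie in $\Sp(n,\C)$ because $x,y\in\mathfrak{sp}(n,\C)$ (Proposition \ref{lem:1}) and the group elements are their exponentials. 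Hence $H_{2m+1}(t,s)\subset\SL(2m+1,\Z)$ and $H_{2m}(t,s)\subset\Sp(2m,\Z)$.

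Density and freeness are immediate: by Proposition \ref{free}, for $|t|>t_0$ and $|s|>s_0$ the group $H_n(t,s)$ is free of rank $2$ and Zariski-dense in $\SL(2m+1,\C)$ (resp. $\Sp(2m,\C)$). This supplies the Zariski-density in the ambient algebraic group required of a thin group, and it records the crucial structural fact that $H$ is isomorphic to the nonabelian free group $F_2$.

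The main obstacle is showing infinite index, and here I would invoke rigidity of the ambient arithmetic group. Because $n>2$ we have $2m+1\ge 3$ and $2m\ge 4$, so both $\SL(2m+1,\Z)$ and $\Sp(2m,\Z)$ have Kazhdan's property (T). Property (T) is inherited by finite-index subgroups, whereas a nonabelian free group fails property (T) (for instance because its abelianization is infinite). If $H$ had finite index in the arithmetic group it would therefore have property (T), contradicting $H\cong F_2$; hence $H$ has infinite index. Alternatively one can argue more elementarily: any finite-index subgroup of $\SL(2m+1,\Z)$ or $\Sp(2m,\Z)$ contains a copy of $\Z^2$ generated by two commuting integral root unipotents, while $F_2$ contains no such subgroup. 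Combining integrality, Zariski-density and infinite index yields that $H_{2m+1}(t,s)$ and $H_{2m}(t,s)$ are thin subgroups of $\SL(2m+1,\Z)$ and $\Sp(2m,\Z)$, respectively.
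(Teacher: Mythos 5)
Your proposal is correct, and it is more careful than the paper in one respect: you actually verify that $t\in(n-1)!\,\Z$, $s\in\Z$ force the generators to have integer entries (via $(j-i)!\mid(n-1)!$), a point the paper leaves implicit in the hypothesis. Where you genuinely diverge is the infinite-index step. The paper argues via the congruence subgroup property: any finite-index subgroup of $\SL(n,\Z)$ or $\Sp(n,\Z)$, $n>2$, contains a principal congruence subgroup $\Gamma_m$; since $\Gamma_m$ contains a nonabelian unipotent subgroup it is not free, whereas every subgroup of the free group $H_n(t,s)$ is free by Nielsen--Schreier, so $H_n(t,s)$ cannot have finite index. You instead invoke Kazhdan's property (T) for these lattices, which passes to finite-index subgroups but fails for $F_2$. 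Both routes rest on deep theorems (Bass--Milnor--Serre versus Kazhdan), and both are sound here since $2m+1\ge 3$ and $2m\ge 4$ put us in the higher-rank regime. Your parenthetical alternative is actually the cheapest of the three: a finite-index subgroup contains suitable powers of two commuting elementary unipotents, hence a copy of $\Z^2$, which cannot embed in a free group because commuting elements of a free group share a cyclic subgroup; this avoids both CSP and property (T) entirely. The only nitpick is that a finite-index subgroup need not contain the root unipotents themselves, only powers of them --- but that still yields the required $\Z^2$, so the argument stands.
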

\begin{proof}
The congruence subgroup property implies that a finite index subgroup of $\SL(n, \Z)$, $\Sp(n, \Z)$, $n > 2$,
contains a principal congruence subgroup, i.e. the kernel $\Gamma_m$ of the reduction modulo $m$ homomorphism 
for some positive integer $m$. However, $\Gamma_m$ is not free as has a non-abelian unipotent (i.e. nilpotent) subgroup.
Hence, as $H_{n}(t,s)$ is free it is of infinite index.
\end{proof}

Similarly, we can construct thin subgroups of $\SL(2m, \Z)$. For example, if $t$, $r$, $b_1$, $b_2$, $b_3$
are as in Example \ref{exa:10} then $L_4(t,r) < \SL(4, \Z)$ is thin for $t \in 6\Z$, $r \in \Z$.
Notice that in \cite{SHumphries} another infinite series of free dense subgroups of $\SL(n, \Z)$ were obtained. 
Those subgroups are generated by $n$ elements and are of rank $n$. 



The above results enable us to construct further examples of free dense subgroups. 
  Consider the groups $H_n(t,s) := \langle a(t), b(s)\rangle$ as in Proposition
  \ref{free}. As we have noted, $a(t)^k = a(kt)$ and $b(s)^k=b(ks)$ for $k\in \Z$.
  Now let $S,T$ be indeterminates and consider a word
  $$W(S,T) = a(k_1T)b(l_1S)\cdots a(k_uT)b(l_uS).$$
  We say 
	that this word is nontrivial if at least one of the $k_i$, $l_j$ is
  nonzero. 
  Observe that $W(S,T)$ is an $n\times n$-matrix with entries in $\Q[S,T]$.
  As seen in
  Proposition \ref{free}, there are $s_1,t_1\in\C$ such that $H_n(s_1,t_1)$
  is free.
  This means that if $W(S,T)$ is nontrivial then $W(s_1,t_1)$ is not the
  identity
  matrix. In particular $W(S,T)$ itself is not the identity matrix.
  Now let $s_2,t_2\in \C$ be algebraically independent. Then if $W(S,T)$ is
  notrivial, $W(s_2,t_2)$ is not the identity. It follows that $H_n(s_2,t_2)$
  is free.

  With an analogous argument one sees that
  there is an $s_3\in \C$ such that $H_n(s_3,s_3)$ is free. This
  implies that $H_n(s_4,s_4)$ is free if $s_4$ is transcendental over $\Q$.
In particular, let $s_4 \in \R$ be transcendental, $|s_4| < 1$. 
Then $H_n(s_4^{k}, s_4^{k})$, $k \in \Z$, $k > 0$, gives a family of 2-generator free dense subgroups
with generators in an arbitrary small neighborhood of the identity $1_n$ (cf \cite[Theorem 7]{Kuranishi}). Now Theorem 2.1 in \cite{Breuillard}
implies that for large enough $k$ we have that
$H_n(s_4^k,s_4^k)$ is free and dense in the Euclidean topology.
Furthermore, if $s$ is algebraic and $|\sigma(s)| > max\{s_0, t_0\}$ for some $\sigma \in \rm{Gal}(\Q(s)/\Q)$
then $H_n(s, s)$ is free. 

Similarly, one can construct new families of free subgroups using $L_n(t, r)$.


\subsection*{Acknowledgments}
We are grateful to Professors Dmytro Savchuk, Vladimir Shpilrain for fruitful discussions on free groups.
We thank the Hausdorff Research Institute for Mathematics 
for hospitality and facilitation of this research through 
the Trimester Program `Logic and Algorithms in Group Theory'.
A.~S.~Detinko was supported by Marie Sk\l odowska-Curie Individual 
Fellowship grant H2020 MSCA-IF-2015, no.~704910 (EU Framework 
Programme for Research and Innovation).

\end{document}